\newtheorem{theorem}{Theorem}[section]
\newtheorem{proposition}[theorem]{Proposition} 
\newtheorem{corollary}[theorem]{Corollary}
\newtheorem{lemma}[theorem]{Lemma}
\newtheorem{remark}[theorem]{Remark}
\begin{document}
\title[On the predual of non-commutative $H^\infty$]{On the predual of non-commutative $H^\infty$}
\author[Y. Ueda]
{Yoshimichi UEDA}
\address{
Graduate School of Mathematics, 
Kyushu University, 
Fukuoka, 819-0395, Japan
}
\email{ueda@math.kyushu-u.ac.jp}
\thanks{$^*\,$Supported by Grant-in-Aid for Scientific Research (C) 20540213.}
\thanks{AMS subject classification: Primary:\ 46L52;
secondary:\ 46B20.}
\thanks{Keywords: Non-commutative Hardy space, Predual, Weak compactness.}
\dedicatory{Dedicated to Professor Yoshikazu Katayama on the occasion of his 60th birthday}

\maketitle

\begin{abstract} 
The liftability property of weakly relatively compact subsets in $M_\star/A_\perp$ to $M_\star$ is established for any non-commutative $H^\infty$-algebra $A = H^\infty(M,\tau)$. Some supplementary results to our previous works are also given. 
\end{abstract}

\allowdisplaybreaks{

\section{Introduction} 

Let $H^\infty(\mathbb{D})$ be the Banach algebra of all bounded analytic functions on the open unit disk $\mathbb{D}$ equipped with the supremum norm $\Vert-\Vert_\infty$, and it is known that $H^\infty(\mathbb{D})$ is faithfully embedded into $L^\infty(\mathbb{T})$ by taking non-tangental limits. Via the embedding $H^\infty(\mathbb{D})$ has the `standard' predual $L^1(\mathbb{T})/H^\infty(\mathbb{D})_\perp$, where $H^\infty(\mathbb{D})_\perp$ denotes the pre-annihilator of $H^\infty(\mathbb{D})$ in the dual pairing $L^1(\mathbb{T})^\star = L^\infty(\mathbb{T})$. The space  $L^1(\mathbb{T})/H^\infty(\mathbb{D})_\perp$ has received much attention in Banach space theory, and indeed many serious investigations were carried out, see e.g.~\cite{Pelczynski:CBMS77},\cite[\S6.d]{Pisier:CBMS86}. The present notes are part of our attempts, started at \cite{Ueda:MathAnn09}, to give more `functional analysis insight' to many theorems  obtained in those investigations on $L^1(\mathbb{T})/H^\infty(\mathbb{D})_\perp$ by discussing them in some non-commutative setup. 

Natural non-commutative generalizations of $H^\infty(\mathbb{D})$ were introduced by Arveson \cite{Arveson:AJM67} in the 60's under the name of maximal subdiagonal algebras, and we here call them non-commutative $H^\infty$-algebras. The finite tracial ones have been well-studied so that we mainly deal with the finite tracial non-commutative $H^\infty$-algebras in the present notes. Let $M$ be a finite von Neumann algebra with a faithful normal tracial state $\tau$. A $\sigma$-weakly closed unital (not necessarily self-adjoint) subalgebra $A$ of $M$ is called a finite tracial non-commutative $H^\infty$-algebra, which we denote by $A = H^\infty(M,\tau)$, if $A+A^*$ is $\sigma$-weakly dense in $M$ and the unique $\tau$-preserving conditional expectation $E : M \rightarrow D := A\cap A^*$ is multiplicative on $A$. Here we denote $A^* = \{ a^* \in M\,\big|\, a \in A \}$, while the symbol $X^\star$ has been used as the dual Banach space of a given Banach space $X$. The reader can find an excellent survey for non-commutative $H^\infty$-algebras in \cite{PisierXu:Handbook03}. It is plain to see that $A$ has the `standard' predual $M_\star/A_\perp$, which is the main object in our study.  

It is the main purpose of the present notes to prove that any weakly relatively compact subset in $M_\star/A_\perp$ can be `lifted up' to a weakly relatively compact subset in $M_\star$. In particular, it immediately follows that the Mackey topology on $A$ is indeed the relative topology induced from that on $M$. Hence this part of the present notes provides a non-self-adjoint generalization of Sakai's result \cite{Sakai:IllinoisJMath65} (also Akemann's result \cite[Theorem II.7]{Akemann:TAMS67}). Here, recall that this liftability property was already established by many hands, e.g., Kisljakov \cite{Kisljakov:SovietMathDokl75}, Delbaen \cite{Delbaen:SeminaireFA-Polytech77-78}, Pe\l czynski \cite[\S7]{Pelczynski:CBMS77}, in the 70's for the classical and commutative $L^1(\mathbb{T})/H^\infty(\mathbb{D})_\perp$ and its function algebra generalizations (see \cite[p.54]{Pelczynski:CBMS77} for further information), and it played a key r\^{o}le in any existing proof of the fact that $L^1(\mathbb{T})/H^\infty(\mathbb{D})_\perp$ has the Dunford--Pettis property. However the predual of any non-type I von Neumann algebra does never have the Dunford--Pettis property due to Bunce \cite{Bunce:PAMS92} so that there is no hope to establish the Dunford--Pettis property for non-commutative $M_\star/A_\perp$ in general. Nevertheless, the liftability of weakly relatively compact subsets still survive for non-commutative $M_\star/A_\perp$. 

The second purpose is to give some supplements to our previous notes \cite{Ueda:MathAnn09}. Firstly we briefly explain how the results in \cite{Ueda:MathAnn09} can easily be generalized to the case of semifinite tracial non-commutative $H^\infty$-algebras $A = H^\infty(M,\mathrm{Tr})$ with semifinite $M$ and faithful normal semifinite tracial weights $\mathrm{Tr}$. Secondly we discuss the results in \cite{Ando:CommentMath78} that still remain to be proved in the non-commutative setup.   

\medskip\noindent
{\it Acknowledgment.} We thank Professor Hermann Pfitzner for communicating us his recent result together with useful comments.  

\section{Weakly Relatively Compact Subsets in $M_\star/A_\perp$} 

Throughout this section let us assume that $A=H^\infty(M,\tau)$ is a finite tracial non-commutative $H^\infty$-algebra. In what follows we freely identify $M_\star/A_\perp$ with `the normal part' of $A^\star$ by the embedding $[\varphi] \in M_\star/A_\perp \mapsto \varphi|_A \in A^\star$. It was already verified in \cite[Corollary 2]{Ueda:MathAnn09} that $M_\star/A_\perp$ has the Pe{\l}czynski property (V$^\star$) so that the weak relative compactness of a given subset $W$ in $M_\star/A_\perp$ is characterized by the following property: For any wuC series $\sum_k a_k$ in $A = (M_\star/A_\perp)^\star$ one has $\lim_{k\rightarrow\infty}\sup\{ |\varphi(a_k)|\ |\ \varphi \in W \} = 0$. Here a wuC (= weakly unconditionally Cauchy) series $\sum_k x_k$ in a Banach space $X$ means a formal series consisting of elements $x_k$ in $X$ such that $\sum_{k=1}^\infty |\phi(x_k)| < +\infty$ for any $\phi \in X^\star$. See \cite[\S2.4]{AlbiacKalton:Book} for more on wuC series. We begin with a lemma, which originates in Chaumat's work \cite[Lemme 2]{Chaumat:Unpublished74},\cite[Lemme 1 and Corollaire 1]{Chaumat:PacificJMath81}.   

\begin{lemma}\label{L-2.1} Let $\{\varphi_k\}$ be a bounded sequence in $M_\star/A_\perp$. Let $\varphi \in A^\star$ be a weak$^\star$--accumulation point of $\{\varphi_k\}$, and $\varphi = \varphi^n + \varphi^s$ be the Lebesgue decomposition in the sense of \cite{Ueda:MathAnn09}, that is, the normal and singular decomposition of its Hahn--Banach extension {\rm(}i.e., norm-preserving extension{\rm)} $\tilde{\varphi} = \tilde{\varphi}^n+\tilde{\varphi}^s$ in the sense of Takesaki \cite[p.126--127]{Takesaki:Book1} provides such a decomposition as simultaneous restrictions to $A$. If $\varphi^s\neq0$, i.e., $\tilde{\varphi}^s|_A \neq 0$, then there is a subsequence $\{\varphi_{k(j)}\}$ such that the mapping $a \in A \mapsto \{\varphi_{k(j)}(a)\} \in \ell^\infty(\mathbb{N})$ is surjective. 
\end{lemma}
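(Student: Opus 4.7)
The plan is to prove the equivalent statement that (after passing to a subsequence) $\{\varphi_{k(j)}\}$ is equivalent to the unit vector basis of $\ell^1$ in $A^\star$ --- equivalently, by the open mapping theorem, that $a \mapsto (\varphi_{k(j)}(a))_j$ is a surjection $A \to \ell^\infty(\mathbb{N})$. The strategy combines the Pe{\l}czynski property (V$^\star$) of $M_\star/A_\perp$ recalled in the preamble with a diagonal extraction of Chaumat type. First I would pass to a subsequence (still denoted $\{\varphi_k\}$) with $\varphi_k \to \varphi$ weak-$\star$ in $A^\star$. The singularity hypothesis $\varphi^s \neq 0$ forces $\varphi \notin M_\star/A_\perp$, because for any element of $M_\star/A_\perp$ the normal representative provides a Hahn--Banach extension with vanishing singular part. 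Since $M_\star/A_\perp$ is a norm-closed, hence weakly closed, subspace of $A^\star$, no sub-subsequence of $\{\varphi_k\}$ can converge weakly (a weak limit would coincide with the weak-$\star$ limit $\varphi$), so $\{\varphi_k\}$ is not weakly relatively compact in $M_\star/A_\perp$; the (V$^\star$) characterization then produces a wuC series $\sum_k a_k$ in $A$ and a $\delta > 0$ with $|\varphi_k(a_k)| \geq \delta$ for every $k$ after relabeling.

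Next I would turn the wuC hypothesis into an operator. Because $M$ is the dual of $M_\star$ and $\sum_k a_k$ is wuC in $A \subseteq M$, for every $\lambda \in \ell^\infty(\mathbb{N})$ the partial sums $\sum_{k \leq N} \lambda_k a_k$ converge $\sigma$-weakly in $M$; the $\sigma$-weak closedness of $A$ places the limit $\Lambda(\lambda) := \sum_k \lambda_k a_k$ inside $A$, and $\Lambda \colon \ell^\infty \to A$ is bounded. Composing $\Lambda$ with evaluation at the chosen functionals yields an operator $\ell^\infty \to \ell^\infty$ given by the infinite matrix $\widetilde{M}_{i,j} := \varphi_{m_i}(a_{m_j})$ for any subsequence $m_1 < m_2 < \cdots$ to be chosen, and surjectivity of $a \mapsto (\varphi_{m_i}(a))_i$ reduces to invertibility of this matrix on $\ell^\infty$, which I aim to secure by strict diagonal dominance.

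For the diagonal extraction I would use that, for any $\phi \in A^\star$, the wuC property of $\sum_k a_k$ gives $\sum_k |\phi(a_k)| < \infty$ and hence $\phi(a_k) \to 0$; in particular $\varphi(a_k) \to 0$, so by pre-thinning the $a_k$ (and the $\varphi_k$ in parallel so as to preserve $|\varphi_k(a_k)| \geq \delta$) I may assume $|\varphi(a_k)| < 2^{-k}$. Then I choose indices $m_1 < m_2 < \cdots$ inductively: at stage $n$, the wuC property applied to the finitely many fixed functionals $\varphi_{m_1}, \dots, \varphi_{m_{n-1}}$ supplies $\varphi_{m_i}(a_k) \to 0$ as $k \to \infty$, which controls the upper-triangular entries $\varphi_{m_i}(a_{m_n})$; and the weak-$\star$ convergence $\varphi_m \to \varphi$ combined with $|\varphi(a_{m_i})| < 2^{-m_i}$ supplies $\varphi_m(a_{m_i})$ small for $m$ large, which controls the lower-triangular entries $\varphi_{m_n}(a_{m_i})$. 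A suitable threshold at each stage produces a matrix $\widetilde{M}$ with $|\widetilde{M}_{i,i}| \geq \delta/2$ and $|\widetilde{M}_{i,j}| \leq \delta\, 2^{-|i-j|-10}$ for $i \neq j$, which is invertible on $\ell^\infty$ by a Neumann series, completing the argument. I expect the main obstacle to be exactly this interleaved inductive choice: the row and column decays rely on structurally different inputs (wuC for rows, weak-$\star$ convergence together with the preliminary smallness of $\varphi(a_k)$ for columns), so the induction must track two one-sided limits simultaneously while still forcing $m_n \to \infty$.
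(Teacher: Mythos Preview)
Your argument has a gap at the opening move: you ``pass to a subsequence with $\varphi_k \to \varphi$ weak$^\star$'', but a weak$^\star$--accumulation point of a bounded sequence in $A^\star$ need not be attained along any subsequence unless $A$ is separable, and no separability hypothesis is in force. This matters twice. First, your deduction that no sub-subsequence converges weakly relies on uniqueness of a weak$^\star$ \emph{limit}; without it, the clean fix is: if $\{\varphi_k\}$ were weakly relatively compact in $M_\star/A_\perp$, its weak closure there would be $\sigma(A^\star,A)$--compact (the two topologies agree on $M_\star/A_\perp$), hence $\sigma(A^\star,A)$--closed in $A^\star$, forcing $\varphi\in M_\star/A_\perp$, a contradiction. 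Second, and more seriously, after the (V$^\star$) extraction you have replaced $\{\varphi_k\}$ by a subsequence that need not have $\varphi$ as an accumulation point at all, so your control of the lower-triangular entries via ``$\varphi_m(a_{m_i})\to\varphi(a_{m_i})$'' is unsupported. The repair is to take \emph{any} weak$^\star$--accumulation point $\chi$ of the already-extracted subsequence, use $\chi(a_k)\to 0$ (from wuC) for the pre-thinning, and at each stage of the induction invoke only the accumulation-point property of $\chi$ on the finitely many elements $a_{m_1},\dots,a_{m_{n-1}}$ to choose $m_n$. The paper proceeds exactly this way: it never passes to a convergent subsequence, using the accumulation-point property finitely many times per step.

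Modulo that repair your route is sound but genuinely different from the paper's. You use property~(V$^\star$) as a black box to obtain a wuC series with large diagonal $|\varphi_k(a_k)|\ge\delta$ and then extract a diagonally dominant matrix. The paper instead works beneath (V$^\star$): from $\tilde\varphi^s(b)=1$ and the Amar--Lederer peak result \cite[Theorem~1]{Ueda:MathAnn09} it builds $b_i=a_ib$ with $\varphi_k(b_i)\to 0$ (normality of the $\varphi_k$) and $\varphi(b_i)\to 1$ (singularity of $\tilde\varphi^s$), extracts so that $\big(\varphi_{k(j_1)}(b_{i(j_2)})\big)$ is near $0$ above the diagonal and near $1$ on and below it, and then the differences $b_{i(j)}-b_{i(j+1)}$ yield a near-identity matrix from which surjectivity follows by an iterative correction. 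Your approach is shorter once (V$^\star$) is granted; the paper's is more self-contained and actually rederives (V$^\star$) as a by-product (Remark~\ref{R-2.2}). One minor aside: your ``equivalently, by the open mapping theorem'' is not quite an equivalence---$\ell^1$-basis equivalence of $\{\varphi_{k(j)}\}$ only forces $T^\star$ to be bounded below on $\ell^1\subset(\ell^\infty)^\star$, not on all of $(\ell^\infty)^\star$---but since you establish surjectivity directly via the invertible matrix this does not affect the proof.
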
 
\begin{proof} By assumption there is $b \in A$ with $\tilde{\varphi}^s(b) = 1$, and also by \cite[Theorem 1]{Ueda:MathAnn09} together with Pe{\l}czynski's trick (see the proof of \cite[Lemma 2.9]{HarmandWerner^2:Book} or \cite[Proposition 3.5.2]{AlbiacKalton:Book}) there are a sequence $\{a_i\}$ in $A$ and a projection $p \in M^{\star\star}$ such that (i) $\lim_{i\rightarrow\infty} a_i = 0$ in $\sigma(M,M_\star)$, (ii) $\lim_{i\rightarrow\infty} a_i = p$ in $\sigma(M^{\star\star},M^\star)$, (iii) $\tilde{\varphi}^s = \tilde{\varphi}^s\cdot p$, (iv) $\sum_{i=1}^\infty|\phi(a_i -a_{i-1})| < + \infty$ for every $\phi \in M^\star$ with $a_0 := 0$. Set $b_i := a_i b$, $i \in \mathbb{N}$. Then (a) $\varphi_k(b_i) \rightarrow 0$ as $i \rightarrow\infty$ for every $k \in \mathbb{N}$; (b) $\varphi(b_i) = \tilde{\varphi}(b_i) = \tilde{\varphi}^n(a_i b) + \tilde{\varphi}^s(a_i b) \rightarrow (\tilde{\varphi}^s\cdot p)(b) = \tilde{\varphi}^s(b) = 1$ as $i \rightarrow \infty$. The rest of the proof is the same as in \cite[Lemm\`{e} 2]{Chaumat:Unpublished74} or \cite[Corollaire 1]{Chaumat:PacificJMath81}, but we will give a detailed argument for the sake of completeness. 

We then prove that there are two subsequences $\{b_{i(j)}\}$, $\{\varphi_{k(j)}\}$ with the properties: (c) $|\varphi_{k(j_1)}(b_{i(j_2)})| \leq 1/2^{j_2+2}$ for $j_1 \leq j_2 -1$; (d) $|\varphi_{k(j_1)}(b_{i(j_2)})-1| \leq 1/2^{j_2+2}$ for $j_1 \geq j_2$. By (b) one can find $i(1)$ in such a way that $|\varphi(b_{i(1)})-1| \leq 1/16$. Since $\varphi$ is a weak$^\star$--accumulation point of $\{\varphi_k\}$, there is $k(1)$ so that $|\varphi_{k(1)}(b_{i(1)}) - \varphi(b_{i(1)})| \leq 1/16$. Thus, $|\varphi_{k(1)}(b_{i(1)})-1| \leq 1/8$. Assume, as induction hypothesis,  that one has already constructed the desired $b_{i(j)}$'s and $\varphi_{k(j)}$ until $j=l$, and also that those $b_{i(j)}$, $1 \leq j \leq l$, satisfy $|\varphi(b_{i(j)}) - 1| \leq 1/2^{j+3}$. By (a) and (b) one can find $i(l+1)$ with $i(l) \lneqq i(l+1)$ in such a way that $|\varphi_{k(j)}(b_{i(l+1)})| \leq 1/2^{l+3}$ for all $1 \leq j \leq l$ and $|\varphi(b_{i(l+1)})-1| \leq 1/2^{l+4}$. Since $\varphi$ is weak$^\star$--accumulation point of $\{\varphi_k\}$, one can also find $k(l+1)$ with $k(l) \lneqq k(l+1)$ in such a way that $|\varphi(b_{i(j)}) - \varphi_{k(l+1)}(b_{i(j)})|\leq 1/2^{l+4}$ for all $1 \leq j \leq l+1$. Hence $|\varphi_{k(l+1)}(b_{i(j)})-1| \leq 1/2^{l+3} \leq 1/2^{j+2}$ for all $1 \leq j \leq l+1$. In this way, the desired subsequences can be constructed by induction.  

Let $\alpha = \{\alpha_j\}$ be an arbitrary sequence of complex numbers with $\Vert\alpha\Vert_\infty := \sup|\alpha_j| < +\infty$. Set $c_{\alpha,m} := \sum_{j=1}^m \alpha_j(b_{i(j)} - b_{i(j+1)}) \in A$, $m \in \mathbb{N}$. Notice that the subsequence $\{b_{i(j)}\}$ still satisfies that (e) $\sum_{j=1}^\infty |\phi(b_{i(j+1)} - b_{i(j)})| < +\infty$ for any $\phi \in M^\star$ due to (iv). Thus there is $c_\alpha := \sum_{j=1}^\infty \alpha_j (b_{i(j+1)}-b_{i(j)}) \in M$ in the $\sigma$-weak topology such that $\psi(c_\alpha) = \sum_{j=1}^\infty \alpha_j\psi(b_{i(j)}-b_{i(j+1)})$, $\psi \in M_\star$ and, in particular, $c_\alpha$ falls in $A$. Remark that there is a universal constant $K>0$ (i.e., independent of the particular choice of $\alpha$) such that $\Vert c_\alpha\Vert_\infty \leq K \Vert\alpha\Vert_\infty$ (see the proof of \cite[Lemma 2.4.6]{AlbiacKalton:Book}). By (c) and (d) one has $|\varphi_{k(j_1)}(b_{i(j_2)}-b_{i(j_2+1)})| \leq 1/2^{j_2+1}$ if $j_1 \neq j_2$ and also $|\varphi_{k(j)}(b_{i(j)}-b_{i(j+1)})-1| \leq 1/2^{j+1}$. Hence $|\varphi_{k(j)}(c_\alpha) - \alpha_j| \leq \Vert\alpha\Vert_\infty/2$. Set $d_1 := c_\alpha$ Then, consider $\alpha^{(2)} := \{\alpha_j - \varphi_{k(j)}(d_1)\}$ and construct $d_2 := c_{\alpha^{(2)}}$ from $\alpha^{(2)}$ in the same way as above. Since $\Vert\alpha^{(2)}\Vert_\infty \leq \Vert\alpha\Vert_\infty/2$, one has $\Vert d_2\Vert_\infty \leq K\Vert\alpha\Vert_\infty/2$ and $|\varphi_{k(j)}(d_1+d_2) - \alpha_j| \leq \Vert\alpha\Vert_\infty/4$. In this way, we can inductively construct $d_3,d_4,\dots \in A$ so that $\Vert d_l \Vert_\infty \leq K\Vert\alpha\Vert_\infty/2^{l-1}$ and $|\varphi_{k(j)}(\sum_{l=1}^m d_l) - \alpha_j| \leq \Vert\alpha\Vert_\infty/2^m$. Therefore, $\sum_{l=1}^\infty d_l \in A$ and $\varphi_{k(j)}(\sum_{l=1}^\infty d_l) = \alpha_j$ for all $j$. \end{proof}

\begin{remark}\label{R-2.2} {\rm The above proof gives a direct proof to the part of \cite[Corollary 2]{Ueda:MathAnn09} showing that $M_\star/A_\perp$ has the property (V$^\star$) as follows. Assume that a given bounded subset $W \subset M_\star/A_\perp$ is NOT weakly relatively compact. By the Eberlein--Smulian theorem one can find a sequence $\{\varphi_k\} \subset W$ such that any its subsequence does NOT converge in $\sigma(M_\star/A_\perp,A)$. By the Alaoglu theorem $\{\varphi_k\}$ has a weak$^\star$--accumulation point $\varphi \in A^\star$, and the assumption here says that $\varphi^s \neq 0$. Hence, by the above proof one can find a subsequence $\{\varphi_{k(j)}\}$ and a sequence $\{b_{i(j)}\}$ of elements in $A$ with the properties (c),(d),(e). Then, by (c),(d) one has $|\varphi_{k(j-1)}(b_{i(j)}-b_{i(j-1)})-1| \leq |\varphi_{k(j-1)}(b_{i(j)})| + |\varphi_{k(j-1)}(b_{i(j-1)})-1| \leq 1/2^{j+2} + 1/2^{j+1}$, and thus $1 - |\varphi_{k(j-1)}(b_{i(j)}-b_{i(j-1)})| \leq 1/2^{j+2} + 1/2^{j+1}$. Therefore, $\sup\{|\varphi(b_{i(j)}-b_{i(j-1)})|\,|\,\varphi \in W\} \geq |\varphi_{k(j-1)}(b_{i(j)}-b_{i(j-1)})| \geq 1-1/2^{j+2} - 1/2^{j+1} \rightarrow 1$ as $j\rightarrow\infty$. Moreover, by (e), $\sum_j (b_{i(j)}-b_{i(j-1)})$ is a wuC series.}
\end{remark}

The non-commutative Gleason--Whitney theorem (\cite[Theorem 3 (2)]{Ueda:MathAnn09},\cite[Theorem 4.1]{BlecherLabuschagne:Studia07}, and also see Remark \ref{R-2.5} below) says that the unique Hahn--Banach extension $\varphi \in M_\star/A_\perp (\subset A^\star) \mapsto \tilde{\varphi} \in M_\star$ gives a right inverse of the quotient map from $M_\star$ onto $M_\star/A_\perp$. Let us denote by $(M_\star/A_\perp)^+$ the set of all $\varphi \in M_\star/A_\perp$ with $\varphi(1) = \Vert\varphi\Vert$. It is easy to see that $(M_\star/A_\perp)^+ = \{\psi|_A \in M_\star/A_\perp\,|\,\psi \in M_\star^+ \}$, where $M_\star^+$ denotes all the positive normal linear functionals on $M$.   

\begin{corollary}\label{C-2.3} A subset $W \subseteq (M_\star/A_\perp)^+$ is weakly relatively compact if and only if so is $\tilde{W} := \{ \tilde{\varphi} \in M_\star^+\,\big|\,\varphi \in W \}$.  
\end{corollary}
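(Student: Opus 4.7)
The easy direction is immediate: if $\tilde W$ is weakly relatively compact in $M_\star$, then $W = q(\tilde W)$ is weakly relatively compact in $M_\star/A_\perp$, since the quotient map $q : M_\star \to M_\star/A_\perp$ is bounded linear and hence weakly continuous. For the converse, assume $W$ is weakly relatively compact, and take an arbitrary sequence $\{\tilde{\varphi}_k\} \subset \tilde W$; the plan is to extract a subsequence converging in $\sigma(M_\star, M)$. By Eberlein--Smulian applied to $W$, after passing to a subsequence one has $\varphi_k \to \varphi_0$ in $\sigma(M_\star/A_\perp, A)$ for some $\varphi_0 \in M_\star/A_\perp$. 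Testing against $1 \in A$ gives $\varphi_0(1) = \lim \varphi_k(1) = \lim \Vert\varphi_k\Vert$, and combined with $\Vert\varphi_0\Vert \leq \liminf \Vert\varphi_k\Vert$ this forces $\varphi_0 \in (M_\star/A_\perp)^+$, whence its Hahn--Banach extension $\tilde{\varphi}_0$ lies in $M_\star^+$.

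The key step is to show that every weak-$\star$ accumulation point of $\{\tilde{\varphi}_k\}$ in $M^\star$ (such points exist by Banach--Alaoglu) coincides with $\tilde{\varphi}_0$. Let $\tilde\psi \in M^\star$ be such a point. From $\tilde\psi(x) = \lim_\alpha \tilde{\varphi}_{k(\alpha)}(x) \geq 0$ for $x \in M^+$ along the defining subnet, $\tilde\psi$ is positive, so $\Vert\tilde\psi\Vert = \tilde\psi(1)$. On the other hand, $\tilde\psi|_A$ is a weak-$\star$ accumulation point of $\{\varphi_k\}$ in $A^\star$, and since the sequence $\{\varphi_k\}$ already converges to $\varphi_0$ in $\sigma(M_\star/A_\perp, A) = \sigma(A^\star,A)|_{M_\star/A_\perp}$, every subnet has the same limit, so $\tilde\psi|_A = \varphi_0$. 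Combining these two facts, $\Vert\tilde\psi\Vert = \tilde\psi(1) = \varphi_0(1) = \Vert\varphi_0\Vert$, so $\tilde\psi$ is a Hahn--Banach extension of $\varphi_0$; the non-commutative Gleason--Whitney theorem recalled just before the statement then forces $\tilde\psi = \tilde{\varphi}_0 \in M_\star$.

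With this identification, the bounded sequence $\{\tilde{\varphi}_k\}$ lives in a weak-$\star$ compact Hausdorff ball of $M^\star$ and has $\tilde{\varphi}_0$ as its unique cluster point. A standard topological argument --- were the sequence to fail to converge to $\tilde{\varphi}_0$, one could trap a subsequence outside a weak-$\star$ neighborhood of $\tilde{\varphi}_0$, and this subsequence would still have a cluster point inside the compact complement, contradicting uniqueness --- yields $\tilde{\varphi}_k \to \tilde{\varphi}_0$ in $\sigma(M^\star, M)$, hence in $\sigma(M_\star, M)$ since both the sequence and its limit lie in $M_\star$. By Eberlein--Smulian, $\tilde W$ is weakly relatively compact. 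The delicate point in this plan is the identification step: it crucially couples positivity of $\tilde\psi$ (yielding $\Vert\tilde\psi\Vert = \tilde\psi(1)$) with the positivity hypothesis on $W$ (yielding $\varphi_0(1) = \Vert\varphi_0\Vert$) to reduce the equality of two extensions to Gleason--Whitney; without the assumption $W \subseteq (M_\star/A_\perp)^+$, one would not have control on the norm of a weak-$\star$ cluster point and the argument would collapse.
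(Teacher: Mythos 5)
Your proof is correct, but it takes a genuinely different route from the paper's. The paper argues by contradiction: if $\tilde{W}$ is not weakly relatively compact, Eberlein--Smulian and Alaoglu produce a sequence in $\tilde{W}$ with a weak$^\star$--accumulation point $\psi \in M^\star$ whose singular part is non-zero; positivity gives $\psi^s(1) = \Vert\psi^s\Vert \neq 0$, hence $\psi^s|_A \neq 0$, and Lemma \ref{L-2.1} then yields a subsequence $\{\varphi_{k(j)}\}$ for which $a \mapsto \{\varphi_{k(j)}(a)\}$ maps $A$ onto $\ell^\infty(\mathbb{N})$, contradicting the weak relative compactness of $W$. You instead argue directly, with the uniqueness part of the non-commutative Gleason--Whitney theorem as the engine: positivity forces every weak$^\star$--cluster point $\tilde{\psi}$ of the chosen sequence to satisfy $\Vert\tilde{\psi}\Vert = \tilde{\psi}(1) = \varphi_0(1) = \Vert\varphi_0\Vert$, so $\tilde{\psi}$ is a Hahn--Banach extension of the weak limit $\varphi_0$ of the restrictions and therefore coincides with the normal extension $\tilde{\varphi}_0$; weak$^\star$--compactness of the ball together with uniqueness of the cluster point then upgrades this to $\tilde{\varphi}_k \rightarrow \tilde{\varphi}_0$ weakly in $M_\star$. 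Your route bypasses Lemma \ref{L-2.1} (and the peak-projection machinery behind it) entirely, using only the Gleason--Whitney right inverse recalled just before the statement, and it identifies the weak limit explicitly; the paper's route costs nothing extra once Lemma \ref{L-2.1} is in place and keeps the section's machinery uniform. Both proofs exploit positivity in the same essential way, namely to convert norm information at $1 \in A$ into control over restrictions to $A$.
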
 
\begin{proof} The `if' part is trivial. Hence it suffices to prove the `only if' part. On contrary, suppose that $\tilde{W}$ is NOT weakly relatively compact. By the Eberlein--Smulian theorem there is a sequence $\{\tilde{\varphi}_k\} \subset \tilde{W}$ such that any its subsequence does NOT  converge in $\sigma(M_\star/A_\perp,A)$. Since $W$ is weakly relatively compact by assumption, $W$ is bounded so that $\tilde{W}$ is too. Hence the Alaoglu theorem shows that $\{\tilde{\varphi}_k\}$ has a weak$^\star$--accumulation point $\psi \in M^\star$ whose Lebesgue decomposition $\psi = \psi^n+\psi^s$ (\cite[Ch.~III]{Takesaki:Book1}) must satisfy $\psi^s \neq 0$. Since all $\tilde{\varphi}_k$'s are positive, so is $\psi$ and hence $\psi^s|A \neq 0$ because $\psi^s$ is also positive and $\psi^s(1) = \Vert\psi^s\Vert \neq 0$. Clearly $\psi|_A$ gives a weak$^\star$--accumulation point of $\{\varphi_k\}$ ($\subseteq W$), and Lemma \ref{L-2.1} shows the existence of a subsequence $\{\varphi_{k(j)}\}$ so that $a \in A \mapsto \{\varphi_{k(j)}(a)\} \in \ell^\infty(\mathbb{N})$ is surjective, a contradiction to the weak relative compactness of $W$. (Indeed, the Eberlein--Smulian theorem enables us to find a subsequence $\{\varphi_{k(j(l))}\}$ such that $\varphi_{k(j(l))}(a)$ converges as $l \rightarrow \infty$ for every $a \in A$. However, an element $\alpha = \{\alpha_j\} \in \ell^\infty(\mathbb{N})$ with $\alpha_{j(l)} := (-1)^l$ satisfies that $\alpha_{j(l)}$ does not converge as $l \rightarrow \infty$.)    
\end{proof}  

The next is a technical lemma. A key idea came to me by a conversation with Professor Masamichi Takesaki some years ago.  

\begin{lemma}\label{L-2.4} Let $\tilde{\varphi} \in M_\star$ be the Hahn--Banach extension of a given $\varphi \in M_\star/A_\perp$. Then there exists $a \in A$ with $\Vert a \Vert_\infty \leq 1$ such that $\varphi(a) = \Vert\varphi\Vert = \Vert\tilde{\varphi}\Vert$, and moreover such an element $a \in A$ satisfies that $\tilde{\varphi} = a^*\cdot|\tilde{\varphi}|$ and $|\tilde{\varphi}| = a\cdot\tilde{\varphi}$.  
\end{lemma}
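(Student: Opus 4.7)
The plan is first to produce $a$ via a Banach--Alaoglu argument, and then to deduce the two module identities by applying the Cauchy--Schwarz inequality for the positive functional $|\tilde\varphi|$ to two positive elements whose $|\tilde\varphi|$-values are forced to vanish.

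\emph{Existence.} Since $A$ is $\sigma$-weakly closed in $M$, the bipolar theorem gives the duality $A = (M_\star/A_\perp)^\star$. By Banach--Alaoglu, the closed unit ball of $A$ is compact in the $\sigma(A,M_\star/A_\perp)$-topology, on which $a\mapsto \varphi(a)$ is continuous; hence $|\varphi|$ attains its supremum $\Vert\varphi\Vert$ on this ball. Multiplying by a unimodular scalar, we obtain $a\in A$ with $\Vert a\Vert_\infty\leq 1$ and $\varphi(a)=\Vert\varphi\Vert=\Vert\tilde\varphi\Vert$, the last equality being the Hahn--Banach extension property of $\tilde\varphi$.

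\emph{Reduction to Cauchy--Schwarz.} Let $\tilde\varphi = v\cdot|\tilde\varphi|$ be the polar decomposition in $M_\star$, so $v\in M$ is a partial isometry with $v^*v = s(|\tilde\varphi|)$ and $\tilde\varphi(x) = |\tilde\varphi|(xv)$. Set $\omega:=|\tilde\varphi|$ and $b:=av\in M$. Then $\Vert b\Vert_\infty\leq 1$ and $\omega(b)=\tilde\varphi(a)=\Vert\tilde\varphi\Vert=\omega(1)$; positivity of $\omega$ and the support property then yield $\omega(b^*)=\omega(v^*v)=\omega(av)=\omega(v^*a^*)=\omega(1)$. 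A direct expansion gives
\begin{align*}
\omega\bigl((1-b)^*(1-b)\bigr) &= \omega(b^*b) - \omega(1), \\
\omega\bigl((v-a^*)^*(v-a^*)\bigr) &= \omega(aa^*) - \omega(1),
\end{align*}
and since $b^*b, aa^* \leq 1_M$ both right-hand sides are $\leq 0$; combined with the positivity of the left-hand sides, both must vanish.

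\emph{Finish.} Applying the Cauchy--Schwarz inequality $|\omega(xy)|^2\leq\omega(xx^*)\,\omega(y^*y)$ with $y=1-b$ and with $y=v-a^*$, the right-hand sides vanish by the previous step, so $\omega(x(1-b))=0$ and $\omega(x(v-a^*))=0$ for every $x\in M$. The first rearranges to $\omega(xav)=\omega(x)$, i.e., $\tilde\varphi(xa)=|\tilde\varphi|(x)$, giving $|\tilde\varphi|=a\cdot\tilde\varphi$; the second rearranges to $\omega(xv)=\omega(xa^*)$, i.e., $\tilde\varphi(x)=(a^*\cdot|\tilde\varphi|)(x)$, giving $\tilde\varphi=a^*\cdot|\tilde\varphi|$. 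The only non-mechanical decision is the choice of the positive element $(v-a^*)^*(v-a^*)$; but the target identity $\tilde\varphi=a^*\cdot|\tilde\varphi|$ is equivalent to $v\cdot|\tilde\varphi|=a^*\cdot|\tilde\varphi|$, so the comparison of $v$ with $a^*$ via $\omega$ is dictated by the conclusion. No deeper input from the $H^\infty$-structure of $A$ is used beyond the existence step: only the $\sigma$-weak closedness of $A$ in $M$.
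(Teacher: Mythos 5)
Your proof is correct and follows essentially the same route as the paper: both obtain $a$ by weak$^\star$--compactness of the unit ball of $A=(M_\star/A_\perp)^\star$, take the polar decomposition $\tilde\varphi=v\cdot|\tilde\varphi|$, and exploit the saturation of the Cauchy--Schwarz inequality for $|\tilde\varphi|$ at the norm-attaining element to identify $a^*$ with $v$ on the support of $|\tilde\varphi|$. Your explicit computation that $|\tilde\varphi|\bigl((v-a^*)^*(v-a^*)\bigr)=0$ and $|\tilde\varphi|\bigl((1-av)^*(1-av)\bigr)=0$ is just an unpacked form of the paper's ``$a^*$ and $v$ agree in the GNS Hilbert space'' step combined with its use of the right polar decomposition.
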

\begin{proof} The existence of such $a \in A$ is clear due to the Alaoglu theorem and the `normality' of $\varphi \in M_\star/A_\perp$. Thus it suffices to show the latter half. 

Let $\tilde{\varphi} = v\cdot|\tilde{\varphi}|$ be the polar decomposition in the sense of Sakai (see \cite[\S4 in Ch.~III]{Takesaki:Book1}). By the Cauchy--Schwarz inequality and $v^* v \leq 1$, $aa^* \leq 1$ one has $\Vert\tilde{\varphi}\Vert = \tilde{\varphi}(a) = |\tilde{\varphi}|(av) \leq |\tilde{\varphi}|(v^*v)^{1/2} |\tilde{\varphi}|(aa^*)^{1/2} \leq |\tilde{\varphi}|(1) = \Vert\tilde{\varphi}\Vert$, and hence      $|\tilde{\varphi}|(av) = |\tilde{\varphi}|(v^*v)^{1/2} |\tilde{\varphi}|(aa^*)^{1/2}$. This implies that $a^*$ and $v$ agree in the GNS Hilbert space associated with $|\tilde{\varphi}|$ so that $a^* - v \in \{x \in M\,|\,|\tilde{\varphi}|(x^*x) = 0 \} = M(1-s(|\tilde{\varphi}|))$, where $s(|\tilde{\varphi}|)$ denotes the support projection of $|\tilde{\varphi}|$. Hence $a^*\,s(|\tilde{\varphi}|) = v\,s(|\tilde{\varphi}|) = v$ thanks to $v^*v= s(|\tilde{\varphi}|)$. Similarly, by using the right polar decomposition $\tilde{\varphi} = (v\cdot|\tilde{\varphi}|\cdot v^*)\cdot v$ one can show that $a\,s(v\cdot|\tilde{\varphi}|\cdot v^*) = v^*$. Then the desired two equations immediately follow.  
\end{proof}

\begin{remark}\label{R-2.5} {\rm The above lemma gives a proof of the uniqueness part of the non-commutative Gleason--Whitney theorem without any non-commutative $L^p$-tool. Let $\psi_1, \psi_2$ be two Hahn--Banach extensions of a given $\varphi \in M_\star/A_\perp$, i.e., $\psi_i \in M^\star$ with $\Vert\psi_i\Vert = \Vert\varphi\Vert$, $i=1,2$. \cite[Theorem 3 (2)]{Ueda:MathAnn09} shows that both $\psi_i$'s fall in $M_\star$. By the above lemma one has $\psi_i = a^*\cdot|\psi_i|$ and $|\psi_i| = a\cdot\psi_i$, $i=1,2$, for common $a \in A$ with $\Vert a\Vert_\infty \leq 1$. Then, for $b \in A$ one has $|\psi_1|(b) = \psi_1(ba) = \varphi(ba) = \psi_2(ba) = |\psi_2|(b)$, and then $|\psi_1|(b^*) = |\psi_2|(b^*)$. Hence $|\psi_1|, |\psi_2|$ agree on $A+A^*$ so that $|\psi_1| = |\psi_2|$ since those linear functionals are normal. Therefore, $\psi_1 = a^*\cdot|\psi_1| = a^*\cdot|\psi_2| = \psi_2$.} 
\end{remark}

In what follows we define the absolute value $|\varphi|$ for a given $\varphi \in M_\star/A_\perp$ as the restriction of $|\tilde{\varphi}|$ to $A$, and write $|W| := \{ |\varphi| \in (M_\star/A_\perp)^+\,\big|\,\varphi \in W \}$ for a given subset $W \subseteq M_\star/A_\perp$. 

\begin{remark}\label{R-2.6} {\rm Corollary \ref{C-2.3}, Lemma \ref{L-2.4} and \cite[Corollary 2]{Ueda:MathAnn09} (also Remark \ref{R-2.2} above) are enough to show the liftability of weakly relatively compact subsets in $L^1(\mathbb{T})/H^\infty(\mathbb{D})_\perp$ or more generally in any `commutative' $M_\star/A_\perp$ as follows. Firstly notice that we need to show only that $|W|$ is weakly relatively compact if so is a given subset $W \subseteq M_\star/A_\perp$ thanks to Corollary \ref{C-2.3} together with Kazuyuki Sait\^{o}'s result \cite[Theorem 1]{Saito:TohokuMathJ67}. On contrary, suppose that $|W|$ is NOT weakly relatively compact. Since $M_\star/A_\perp$ has the property (V$^\star$) due to \cite[Corollary 2]{Ueda:MathAnn09} (also Remark \ref{R-2.2} above), there is a wuC series $\sum_k b_k$ in $A$ such that $\sup\{\big||\varphi|(b_k)\big|\,\big|\,\varphi \in W\}$ does NOT converge to $0$ as $k \rightarrow \infty$. Passing to a subsequence of $\{b_k\}$ we may assume that there are a sequence $\{\varphi_k\}$ ($\subseteq W$) and $\varepsilon>0$ so that (i) $\big||\varphi_k|(b_k)\big| \geq \varepsilon$ for all $k$ and (ii) $\sum_{k=1}^\infty |\phi(b_k)| < +\infty$ for any $\phi \in M^\star$. It is plain to see, by (ii), that there is a universal constant $C>0$ with $\sum_{k=1}^m |\phi(b_k)| \leq C\Vert\phi\Vert$ for any $\phi \in M^\star$ and every $m$. By Lemma \ref{L-2.4} one has $\varphi_k = a_k\cdot|\varphi_k|$ for some $a_k \in A$ with $\Vert a_k\Vert_\infty \leq 1$. Now, let us assume that $M$ is commutative. Via the Gel'fand representation $M = C(\Omega)$ we can easily prove that there is a universal constant $C > 0$ such that $\sum_{k=1}^m |\phi(b_k a_k)| \leq C\Vert\phi\Vert$ for any $\phi \in M^\star$ and every $m$. (We could not prove this without the commutativity assumption.) Hence $\sum_k b_k a_k$ is a wuC series in $A$ so that $\lim_{k\rightarrow\infty}\sup\{|\varphi(b_k a_k)| : \varphi \in W\} = 0$ by a well-known corollary of Schur's theorem (see \cite[Lemma 7.1]{Pelczynski:CBMS77}). This contradicts (i).}
\end{remark}

As saw in Remark \ref{R-2.6} we have to follow a different path from many `classical and commutative' proofs, see e.g.~\cite[\S7]{Pelczynski:CBMS77}. The next lemma is a non-self-adjoint counterpart of \cite[Lemma 2]{Sakai:IllinoisJMath65} and \cite[Lemma II.3b]{Akemann:TAMS67}, and we emphasize that our argument is more involved than those due to the non-self-adjointness. The lemma can also be regarded as a non-commutative counterpart of a result in \cite[\S3]{CnopDelbaen:JFA77} (also in \cite[\S4]{Konig:ArchMath83}), but our argument is technically more  involved due to the non-commutativity. 

\begin{lemma}\label{L-2.7} Let $\{\varphi_k\}$ be a sequence in $M_\star/A_\perp$ with $\varphi_k \rightarrow 0$ in $\sigma(M_\star/A_\perp,A)$. Then, for each $\varepsilon>0$ there exist $\delta>0$ and $k_0 \in \mathbb{N}$ such that one has $|\varphi_k(a)| < \varepsilon$ for all $k \geq k_0$ and all $a \in A$ with $\Vert a \Vert_\infty \leq 1$ and $\Vert a\Vert_{\tau,2} < \delta$, where $\Vert a \Vert_{\tau,2} := \tau(a^* a)^{1/2}$.   
\end{lemma}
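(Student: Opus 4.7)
The plan is to argue by contradiction, reducing to a uniform-absolute-continuity statement for the positive functionals $|\tilde{\varphi}_{k}|$ via Lemma~\ref{L-2.4} and a spectral cut-off, and then to deploy a Chaumat-style extraction as in Lemma~\ref{L-2.1} and Remark~\ref{R-2.2}. Suppose the conclusion fails: then after passing to a subsequence there exist $\varepsilon>0$, $k(j)\to\infty$, and $a_j\in A$ with $\Vert a_j\Vert_\infty\leq 1$, $\Vert a_j\Vert_{\tau,2}\to 0$, and $|\varphi_{k(j)}(a_j)|\geq\varepsilon$. Invoking Lemma~\ref{L-2.4}, pick contractions $c_j\in A$ satisfying $\tilde{\varphi}_{k(j)}=c_j^*\cdot|\tilde{\varphi}_{k(j)}|$ and $|\tilde{\varphi}_{k(j)}|=c_j\cdot\tilde{\varphi}_{k(j)}$. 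Cauchy--Schwarz for the positive normal functional $|\tilde{\varphi}_{k(j)}|$ then gives
\[|\varphi_{k(j)}(a_j)|^2=\bigl||\tilde{\varphi}_{k(j)}|(a_jc_j^*)\bigr|^2\leq\Vert\tilde{\varphi}_{k(j)}\Vert\cdot|\tilde{\varphi}_{k(j)}|(a_ja_j^*).\]
Since $\{\Vert\tilde{\varphi}_{k(j)}\Vert\}$ is uniformly bounded, and since $a_ja_j^*\leq\eta+\chi_{(\eta,1]}(a_ja_j^*)$ with $\tau(\chi_{(\eta,1]}(a_ja_j^*))\leq\Vert a_j\Vert_{\tau,2}^2/\eta\to 0$, the problem reduces (for $\eta$ small) to producing a sequence of projections $p_j\in M$ with $\tau(p_j)\to 0$ and $|\tilde{\varphi}_{k(j)}|(p_j)\geq\varepsilon'>0$.

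The principal obstacle is to derive a contradiction with $\varphi_k\to 0$ in $\sigma(M_\star/A_\perp,A)$ from this concentration of $|\tilde{\varphi}_{k(j)}|$ on vanishing-trace projections of $M$. Using Lemma~\ref{L-2.4} once more, $|\tilde{\varphi}_{k(j)}|(p_j)=\tilde{\varphi}_{k(j)}(p_jc_j)$, so $e_j:=p_jc_j\in M$ is a contraction with $\Vert e_j\Vert_{\tau,2}\leq\tau(p_j)^{1/2}\to 0$ and $|\tilde{\varphi}_{k(j)}(e_j)|\geq\varepsilon'$; however $e_j\notin A$ in general, and the Gelfand-based commutative trick of Remark~\ref{R-2.6}---which furnishes a universal $\Vert\cdot\Vert_\infty$-bound on partial sums $\sum_j b_ja_j$ so as to realise a wuC series in $A$---is blocked by the non-commutativity.

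To circumvent this, the plan is to apply Lemma~\ref{L-2.1} to the sequence of positive functionals $\{|\varphi_{k(j)}|\}\subset(M_\star/A_\perp)^+$, noting the identity $|\varphi_{k(j)}|(x)=\varphi_{k(j)}(xc_j)$ for $x\in A$. Combining the concentration $|\tilde{\varphi}_{k(j)}|(p_j)\geq\varepsilon'$ with $\tau(p_j)\to 0$ and the individual absolute continuity of each $|\tilde{\varphi}_{k(j)}|$ with respect to $\tau$, any weak$^\star$ accumulation point of $\{|\varphi_{k(j)}|\}$ in $A^\star$ must carry a nontrivial singular part in the sense of Lemma~\ref{L-2.1}; that lemma together with Pel\-czyn\-ski's trick then produces a subsequence along which $a\mapsto\{|\varphi_{k(j(l))}|(a)\}_l$ surjects onto $\ell^\infty(\mathbb{N})$. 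A further extraction making the $c_{j(l)}$'s weak$^\star$ convergent in $A$, combined with the pointwise convergence $\varphi_k\to 0$ on $A$ and the identity above, is then meant to convert this surjectivity into a contradiction. Verifying the singular-part claim and carefully handling the multiplicative twist by the $c_j$'s without the commutative Gelfand bound is the main technical difficulty, and is precisely where the non-self-adjointness of $A$ makes the argument substantially more involved than the self-adjoint Sakai--Akemann arguments.
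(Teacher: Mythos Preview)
Your reduction to the positive case is sound up to the point where you obtain projections $p_j\in M$ with $\tau(p_j)\to 0$ and $|\tilde{\varphi}_{k(j)}|(p_j)\geq\varepsilon'$, and it is correct that this forces any weak$^\star$ accumulation point of $\{|\varphi_{k(j)}|\}$ in $A^\star$ to have nonzero singular part (positivity gives $\psi^s(1)=\Vert\psi^s\Vert\neq 0$), so Lemma~\ref{L-2.1} indeed applies. The gap is in the last step. From the surjection $a\mapsto\{|\varphi_{k(j(l))}|(a)\}_l$ onto $\ell^\infty(\mathbb{N})$ and the identity $|\varphi_{k(j)}|(a)=\varphi_{k(j)}(ac_j)$ you want a contradiction with $\varphi_k\to 0$ in $\sigma(M_\star/A_\perp,A)$. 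But after extracting so that $c_{j(l)}\to c$ in $\sigma(A,M_\star/A_\perp)$, you must control the diagonal term $\varphi_{k(j(l))}\bigl(a(c_{j(l)}-c)\bigr)$, a pairing of two sequences neither of which converges in norm; weak$^\star$ convergence of $c_{j(l)}$ yields no smallness of $a(c_{j(l)}-c)$ in $\Vert\cdot\Vert_\infty$ or $\Vert\cdot\Vert_{\tau,2}$, and controlling such a term uniformly in $l$ is precisely the content of the lemma you are trying to prove (indeed, Corollary~\ref{C-2.8} uses Lemma~\ref{L-2.7} for exactly this kind of twist). So the argument is circular at its decisive point, which you yourself flag as ``the main technical difficulty'' without resolving it.

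The paper's proof takes a completely different route that avoids Lemma~\ref{L-2.1} and Lemma~\ref{L-2.4} altogether. It exploits that the unit ball of $A$ is complete for $\Vert\cdot\Vert_{\tau,2}$ and applies the Baire category theorem to the sets $\{a:\Vert a\Vert_\infty\leq 1,\ |\varphi_{k(j)}(a)|<\varepsilon/8\ \text{for all}\ j\geq j_0\}$, obtaining a center $a_0$ and radius $\delta>0$. The key new ingredient is the non-commutative Hilbert transform: for each $a_j$ one builds contractions $b_j,c_j\in A$ of the form $\bigl(1+\Vert a_j\Vert_{\tau,2}^{-1/2}(|a_j|^2+\sqrt{-1}(|a_j|^2)^\sim)\bigr)^{-1}$ (and similarly with $|a_j^*|^2$) and uses the non-commutative Riesz bound to get $\Vert 1-b_j\Vert_{\tau,2},\Vert 1-c_j\Vert_{\tau,2}\lesssim\Vert a_j\Vert_{\tau,2}^{1/2}$, $\Vert a_jb_j\Vert_\infty,\Vert a_j^*c_j\Vert_\infty\lesssim\Vert a_j\Vert_{\tau,2}^{1/4}$, together with a positivity estimate $\mathrm{Re}(b_j\xi\mid(1-b_j)\xi)\geq 0$. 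These allow one to manufacture the element $2^{-1}a_j(1-b_j)+(1-\delta/2)c_ja_0b_j\in A$, which lies in the unit ball, is $\Vert\cdot\Vert_{\tau,2}$-close to $a_0$, and dominates $a_j$ well enough to force $|\varphi_{k(j)}(a_j)|<\varepsilon$ for large $j$. This Baire/Hilbert-transform mechanism is where the non-self-adjoint obstacle is actually overcome; your plan never produces an element of $A$ playing this role.
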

\begin{proof} Let $C := \sup\{\Vert\varphi_k\Vert\, \big|\, k \in \mathbb{N}\} < +\infty$ thanks to the uniform boundedness principle. On contrary, we suppose that the desired assertion does NOT hold true. Namely, there are $\{a_j\}$ in the unit ball of $A$, $\varepsilon>0$, and a subsequence $\{\varphi_{k(j)}\}$ such that (i) $\Vert a_j\Vert_{\tau,2} \rightarrow 0$ as $j\rightarrow\infty$ and (ii) $|\varphi_{k(j)}(a_j)| \geq \varepsilon$ for all $j$. 

In the standard representation $M \curvearrowright \mathcal{H} := L^2(M,\tau)$  the $2$-norm $\Vert-\Vert_{\tau,2}$ gives the strong operator topology on the unit ball of $M$ so that the unit ball of $A$ is complete with respect to $\Vert-\Vert_{\tau,2}$. Since $\varphi_{k(j)} \rightarrow 0$ in $\sigma(M_\star/A_\perp,A)$ as $j \rightarrow \infty$, for a given $\varepsilon>0$ the Baire category theorem enables us to find $j_0 \in \mathbb{N}$, $a_0 \in A$ with $\Vert a_0 \Vert_\infty \leq 1$ and $\delta>0$ in such a way that, for each $a \in A$ with $\Vert a\Vert_\infty \leq 1$, one has: $\Vert a- a_0\Vert_{\tau,2} < \delta$ implies $|\varphi_{k(j)}(a)| < \varepsilon/8$ for all $j \geq j_0$. See the second paragraph of the proof of \cite[Lemma 5.5 in Ch.~III]{Takesaki:Book1} for details. 

Define 
\begin{align*}
b_j &:= \big(1+\Vert a_j\Vert_{\tau,2}^{-1/2}(|a_j|^2 +\sqrt{-1}(|a_j|^2)^\sim)\big)^{-1}, \\
c_j &:= \big(1+\Vert a_j\Vert_{\tau,2}^{-1/2}(|a_j^*|^2 +\sqrt{-1}(|a_j^*|^2)^\sim)\big)^{-1},
\end{align*} 
where the `Hilbert transform' of a given $x \in M$ in the sense of \cite{Randrianantoanina:JAustral98},\cite{MarsalliWest:JOT98} is denoted by $x^\sim$, an unbounded operator affiliated with $M$. Here we remark that $\Vert a_j\Vert_{\tau,2} \neq 0$ thanks to (ii). As in the proof of \cite[Theorem 1]{Ueda:MathAnn09} or more precisely by \cite[Lemma 2]{Randrianantoanina:JAustral98}, we observe that the $b_j$ and the $c_j$ are well-defined contractions in $A$. By the non-commutative Riesz theorem \cite[Theorem 1]{Randrianantoanina:JAustral98},\cite[Theorem 5.4]{MarsalliWest:JOT98} one has $\Vert1-b_j\Vert_{\tau,2} \leq \Vert b_j^{-1} - 1\Vert_{\tau,2} \leq 2 \Vert a_j\Vert_{\tau,2}^{-1/2} \Vert |a_j|^2\Vert_{\tau,2} \leq 2 \Vert a_j\Vert_{\tau,2}^{1/2}$ ({\it n.b.}~$\Vert|a_j|\Vert_\infty = \Vert a_j\Vert_\infty \leq 1$). Similarly $\Vert 1-c_j\Vert_{\tau,2} \leq 2 \Vert a_j\Vert_{\tau,2}^{1/2}$. For any $\zeta \in b_j\mathcal{H}$ we have $\Vert a_j\Vert_{\tau,2}^{-1/2}\Vert a_j\zeta\Vert_{\tau,2}^2 = \Vert a_j\Vert_{\tau,2}^{-1/2}(|a_j|^2\zeta|\zeta)_\mathcal{H} \leq |(b_j^{-1}\zeta|\zeta)_\mathcal{H}| \leq \Vert b_j^{-1}\zeta\Vert_{\tau,2}\Vert\zeta\Vert_{\tau,2}$.  Letting $\zeta := b_j\xi$ with arbitrary $\xi \in \mathcal{H}$ we get $\Vert a_j\Vert_{\tau,2}^{-1/2}\Vert a_j b_j\xi\Vert_{\tau,2}^2 \leq \Vert\xi\Vert_{\tau,2}\Vert b_j \xi\Vert_{\tau,2} \leq \Vert\xi\Vert_{\tau,2}$ so that $\Vert a_j b_j \Vert_\infty \leq \Vert a_j\Vert_{\tau,2}^{1/4}$. Similarly $\Vert a_j^* c_j \Vert_\infty \leq \Vert a_j\Vert_{\tau,2}^{1/4}$. For any $\zeta$ in the domain of $b_j^{-1}$ one has $\mathrm{Re}(b_j(b_j^{-1}\zeta)|(1-b_j)(b_j^{-1}\zeta))_\mathcal{H} = \mathrm{Re}\{(\zeta|b_j^{-1}\zeta)_\mathcal{H} - (\zeta|\zeta)_\mathcal{H}\} = \Vert a_j\Vert_{\tau,2}^{1/2}(\zeta||a_j|^2\zeta)_\mathcal{H} \geq 0$ so that $\mathrm{Re}(b_j\xi|(1-b_j)\xi)_\mathcal{H} \geq 0$ for all $\xi \in \mathcal{H}$. Consequently, we have constructed contractions $b_j, c_j \in A$, $j \in \mathbb{N}$, satisfying the following properties: 
\begin{itemize}
\item[(iii)] $\Vert 1 - b_j\Vert_{\tau,2} \leq 2\Vert a_j\Vert_{\tau,2}^{1/2}$, $\Vert 1 - c_j \Vert_{\tau,2} \leq 2\Vert a_j\Vert_{\tau,2}^{1/2}$; 
\item[(iv)] $\Vert a_j b_j\Vert_\infty \leq \Vert a_j\Vert_{\tau,2}^{1/4}$, $\Vert a_j^* c_j\Vert_\infty \leq \Vert a_j \Vert_{\tau,2}^{1/4}$; 
\item[(v)] $\mathrm{Re}(b_j\xi|(1-b_j)\xi)_\mathcal{H} \geq 0$ for all $\xi \in \mathcal{H}$.
\end{itemize}

For any $\xi \in \mathcal{H}$ with $\Vert\xi\Vert_{\tau,2} \leq 1$ we have 
\begin{align*}
&\Vert (2^{-1}a_j(1-b_j) + (1-\delta/2)c_j a_0 b_j)\xi\Vert_\mathcal{H}^2 \\
&\leq 
2^{-2} \Vert(1-b_j)\xi\Vert_\mathcal{H}^2 + (1-\delta/2)\,\mathrm{Re}(a_j(1-b_j)\xi|c_j a_0 b_j\xi)_\mathcal{H} + (1-\delta/2)^2\Vert b_j\xi\Vert_{\tau,2}^2 \\
&\leq \max\{1/2,1-\delta/2\}^2\big(\Vert(1-b_j)\xi\Vert_\mathcal{H}^2 + 2\mathrm{Re}(b_j\xi|(1-b_j)\xi)_\mathcal{H} + \Vert b_j\xi\Vert_{\tau,2}^2\big) \\
&\phantom{aaaaaaaaaaaaaaaaaaaaaaaaaaaaaaaaaaaaaaaaaa}+ |((1-b_j)\xi|a_j^* c_j a_0 b_j\xi)_\mathcal{H}| \\
&\leq \max\{1/2,1-\delta/2\}^2 + 2\Vert a_j^* c_j\Vert_\infty, 
\end{align*} 
since $\mathrm{Re}(b_j\xi|(1-b_j)\xi)_\mathcal{H} \geq 0$ by (v). Hence, by (iv) and (i) one has $\Vert 2^{-1}a_j(1-b_j) + (1-\delta/2)c_j a_0 b_j \Vert_\infty \leq \max\{1/2,1-\delta/2\}^2 + 2\Vert a_j^* c_j\Vert_\infty \longrightarrow \max\{1/2,1-\delta/2\}^2 \lneqq 1$ as $j \rightarrow \infty$. Therefore, $\Vert 2^{-1}a_j(1-b_j) + (1-\delta/2)c_j a_0 b_j \Vert_\infty \leq 1$ for all sufficiently large $j$. By (iii) and (i) one has 
\begin{align*}
\Vert 2^{-1}a_j(1-b_j) &+ (1-\delta/2)c_j a_0 b_j -a_0 \Vert_{\tau,2} \\
&\leq \Vert a_j\Vert_{\tau,2} + (1-\delta/2)\{\Vert c_j a_0 (b_j - 1)\Vert_{\tau,2} + \Vert (c_j-1)a_0\Vert_{\tau,2}\} + \delta/2 \\
&\leq \Vert a_j\Vert_{\tau,2} + 4\Vert a_j\Vert_{\tau,2}^{1/2} + \delta/2 \longrightarrow \delta/2
\end{align*} 
as $j \rightarrow \infty$. Similarly, $\Vert(1-\delta/2)c_j a_0 b_j - a_0\Vert_{\tau,2} \leq 4\Vert a_j\Vert_{\tau,2}^{1/2} + \delta/2 \longrightarrow \delta/2$ as $j \rightarrow \infty$. It follows that $\Vert 2^{-1}a_j(1-b_j) + (1-\delta/2)c_j a_0 b_j -a_0 \Vert_{\tau,2} < \delta$ and $\Vert(1-\delta/2)c_j a_0 b_j - a_0\Vert_{\tau,2} < \delta$ for all sufficiently large $j$. Therefore, by what we prepared in the second paragraph we have, for all sufficiently large $j$, 
\begin{align*}
|\varphi_{k(j)}(2^{-1}a_j)| 
&\leq 
|\varphi_{k(j)}(2^{-1}a_j b_j)| + |\varphi_{k(j)}(2^{-1}a_j(1-b_j) + (1-\delta/2)c_j a_0 b_j)| \\
&\phantom{aaaaaaaaaaaaaaaaaaaaaaaaaaaaaaaaaaa}+ |\varphi_{k(j)}((1-\delta/2)c_j a_0 b_j))| \\
&\leq 
C\Vert a_j\Vert_{\tau,2}^{1/4}/2 + |\varphi_{k(j)}(2^{-1}a_j(1-b_j) + (1-\delta/2)c_j a_0 b_j)| \\
&\phantom{aaaaaaaaaaaaaaaaaaaaaaaaaaaaaaaaaaa}+ |\varphi_{k(j)}((1-\delta/2)c_j a_0 b_j))| \\
&< C\Vert a_j\Vert_{\tau,2}^{1/4}/2 + \varepsilon/8 + \varepsilon/8,
\end{align*} 
since $|\varphi_{k(j)}(a_j b_j)| \leq \Vert \varphi_{k(j)}\Vert \cdot \Vert a_j b_j\Vert_\infty \leq C\Vert a_j \Vert_{\tau,2}^{1/4}$ by (iv). Hence we have $|\varphi_{k(j)}(a_j)| = 2|\varphi_{k(j)}(2^{-1} a_j)| \leq 2\{ C\Vert a_j\Vert_{\tau,2}^{1/4}/2 + \varepsilon/8 + \varepsilon/8\} = C\Vert a_j\Vert_{\tau,2}^{1/4} + \varepsilon/2$ for all sufficiently large $j$ so that by (i) $\limsup_{j\rightarrow\infty}|\varphi_{k(j)}(a_j)| \leq \varepsilon/2$, a contradiction to (ii).       
\end{proof} 

Let $X$ be a Banach space with predual $X_\star$. The Mackey topology on $X$ (with respect to $X_\star$) is the weakest topology that makes all the seminorms $x \in X \mapsto p_W(x) := \sup\{ |\langle x_\star, x\rangle| \,\big|\, x_\star \in W \}$ with weakly relatively compact $W \subseteq X_\star$ be continuous. Namely, a net $\{x_\lambda\}$ converges to $x$ in the Mackey topology if and only if $p_W(x_\lambda-x) \rightarrow 0$ for any weakly relatively compact $W \subset X_\star$. Note that the Mackey topology is clearly stronger than the weak$^\star$--topology $\sigma(X,X_\star)$. 

\begin{corollary}\label{C-2.8} For any weakly relatively compact subset $W \subseteq M_\star/A_\perp$, so is $|W| := \{ |\varphi| \in (M_\star/A_\perp)^+ \,\big|\, \varphi \in W \}$.  
\end{corollary}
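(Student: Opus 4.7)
The plan is to argue by contradiction, combining the property (V$^\star$) characterization of weak relative compactness in $M_\star/A_\perp$ (Remark~\ref{R-2.2}) with Lemmas~\ref{L-2.4} and~\ref{L-2.7}. The crucial ancillary observation I will establish first is that every wuC series $\sum_k b_k$ in $A$ satisfies $\|b_k\|_{\tau,2}\to 0$. Indeed, for every $\xi\in L^2(M,\tau)$ the map $y\mapsto\tau(\xi^* y)$ belongs to $A^\star$ with norm at most $\|\xi\|_{\tau,2}$, so the wuC hypothesis yields $\sum_k|\langle b_k,\xi\rangle_{L^2}|<+\infty$ for every $\xi\in L^2(M,\tau)$. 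Hence $\sum_k b_k$ is wuC in the Hilbert space $L^2(M,\tau)$, and an elementary Khintchine-type computation based on $E_\epsilon\|\sum_k\epsilon_k b_k\|_{\tau,2}^2=\sum_k\|b_k\|_{\tau,2}^2$ (with $\epsilon_k$ i.i.d.\ $\pm1$) together with the uniform boundedness of $\|\sum_k\epsilon_k b_k\|_{\tau,2}$ across all sign choices gives $\sum_k\|b_k\|_{\tau,2}^2<+\infty$.

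Now suppose $|W|$ is not weakly relatively compact. Property (V$^\star$) supplies a wuC series $\sum_k b_k$ in $A$, a sequence $\{\varphi_k\}\subseteq W$, and $\varepsilon>0$ with $||\varphi_k|(b_k)|\ge\varepsilon$. By Lemma~\ref{L-2.4} there are contractions $a_k\in A$ with $|\tilde\varphi_k|=a_k\cdot\tilde\varphi_k$, so $|\varphi_k|(b_k)=\varphi_k(b_k a_k)$. A traciality computation gives
\[
\|b_k a_k\|_{\tau,2}^2 \;=\; \tau(b_k^* b_k\, a_k a_k^*) \;\le\; \tau(b_k^* b_k) \;=\; \|b_k\|_{\tau,2}^2 \;\longrightarrow\; 0,
\]
while $\|b_k a_k\|_\infty\le M:=\sup_j\|b_j\|_\infty<+\infty$. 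By weak relative compactness of $W$ and Eberlein--Smulian, pass to a subsequence with $\varphi_k\to\varphi^\infty$ in $\sigma(M_\star/A_\perp,A)$, and set $\psi_k:=\varphi_k-\varphi^\infty\to 0$ weakly.

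Finally, split $\varphi_k(b_k a_k)=\psi_k(b_k a_k)+\tilde\varphi^\infty(b_k a_k)$. For the first piece, apply Lemma~\ref{L-2.7} to $\psi_k$ and to the unit-ball element $(b_k a_k)/M\in A$ (whose $\|\cdot\|_{\tau,2}$-norm tends to $0$) to conclude $|\psi_k(b_k a_k)|\to 0$. For the second, identify $\tilde\varphi^\infty\in M_\star$ with $h\in L^1(M,\tau)$ and split $h=h_1+h_2$ by cutting $|h|$ spectrally, so that $h_1$ is bounded and $\|h_2\|_{L^1}<\eta$; then
\[
|\tilde\varphi^\infty(b_k a_k)| \;\le\; \|h_1\|_\infty\|b_k a_k\|_{L^1}+\|h_2\|_{L^1}\|b_k a_k\|_\infty \;\le\; \|h_1\|_\infty\|b_k a_k\|_{\tau,2}+\eta M,
\]
which is less than $\varepsilon/4$ for suitable $\eta$ and large $k$. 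Combining, $|\varphi_k(b_k a_k)|\to 0$, contradicting $\ge\varepsilon$.

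The step I expect to be the main obstacle is the ancillary observation $\|b_k\|_{\tau,2}\to 0$: once this is in hand, the rest of the argument is a natural assembly of the previous ingredients, but locating this $L^2(M,\tau)$-wuC trick, which relies on the Hilbert-space structure and the embedding of $L^2$-test functionals into $A^\star$, is what replaces the Gel'fand-representation manipulation of Remark~\ref{R-2.6} in the commutative setting.
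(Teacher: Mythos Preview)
Your proof is correct and follows essentially the same route as the paper's: contradict property~(V$^\star$), transfer $|\varphi_k|(b_k)$ to $\varphi_k(b_ka_k)$ via Lemma~\ref{L-2.4}, pass to a weakly convergent subsequence by Eberlein--Smulian, and conclude via Lemma~\ref{L-2.7} together with $\|b_ka_k\|_{\tau,2}\to0$. The only noteworthy variation is your justification of $\|b_k\|_{\tau,2}\to0$ by the Hilbert-space wuC/Khintchine argument, whereas the paper invokes the coincidence of the Mackey and $\|\cdot\|_{\tau,2}$ topologies on bounded balls (Takesaki) combined with the Schur-type fact that wuC terms converge to $0$ in the Mackey topology; both are valid and your version is pleasantly self-contained.
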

\begin{proof} Firstly, note that $|W|$ is bounded since so is $W$. On contrary, suppose that $|W|$ is NOT weakly relatively compact. Since $M_\star/A_\perp$ has Pe\l czynski's property (V$^\star$) due to \cite[Corollary 2]{Ueda:MathAnn09} (also see Remark \ref{R-2.2} in the present notes), there are a sequence $\{\varphi_k\}$ ($\subseteq W$), a wuC series $\sum_k b_k$ in $A$ and $\varepsilon>0$ so that $|\varphi_k|(b_k) \geq \varepsilon$ for all $k$. By Lemma \ref{L-2.4}, $|\varphi_k| = a_k\cdot\varphi_k$ for some $a_k \in A$ with $\Vert a_k\Vert_\infty \leq 1$. It is known, see \cite[Theorem 5.7 in Ch.~III]{Takesaki:Book1}, that the Mackey topology on $M$ and the metric topology by $\Vert-\Vert_{\tau,2}$ agree on every bounded ball. Thus, by a well-known corollary of Schur's theorem (see \cite[Lemma 7.1]{Pelczynski:CBMS77}) we observe that $\Vert b_k\Vert_{\tau,2} \rightarrow 0$ as $k\rightarrow\infty$, and hence $\Vert b_k a_k\Vert_{\tau,2} \rightarrow 0$ as $k\rightarrow\infty$. By the Eberlein--Smulian theorem one can find a subsequence $\{\varphi_{k(j)}\}$ that converges to some $\varphi \in M_\star/A_\perp$ in $\sigma(M_\star/A_\perp,A)$. Since $\sum_k b_k$ is a wuC series and $\Vert a_k\Vert_\infty \leq 1$, one can easily show that $\{b_k a_k\}$ is norm--bounded. Thus, Lemma \ref{L-3.7} shows that $|\varphi_{k(j)}(b_{k(j)}a_{k(j)})-\varphi(b_{k(j)}a_{k(j)})| \rightarrow 0$ as $j\rightarrow\infty$. It follows that $\varepsilon \leq |\varphi_{k(j)}|(b_{k(j)}) = \varphi_{k(j)}(b_{k(j)}a_{k(j)}) \leq |\varphi_{k(j)}(b_{k(j)}a_{k(j)})-\varphi(b_{k(j)}a_{k(j)})| + |\varphi(b_{k(j)}a_{k(j)})| \rightarrow 0$ as $j\rightarrow\infty$, a contradiction. 
\end{proof}    

Here is the main theorem of this section. 

\begin{theorem}\label{T-2.9} For a subset $W \subset M_\star/A_\perp$ the following conditions are equivalent{\rm:} 
\begin{itemize}
\item[(1)] $W$ is weakly relatively compact. 
\item[(2)] $\tilde{W} := \{\tilde{\varphi} \in M_\star \,\big|\, \varphi \in W \}$ is weakly relatively compact.  
\item[(3)] $|W| := \{|\varphi| \in (M_\star/A_\perp)^+ \,\big|\, \varphi \in W \}$ is weakly relatively compact.
\item[(4)] $|\tilde{W}| := \{ |\tilde{\varphi}| \in M_\star^+ \,\big|\, \varphi \in W \}$ is weakly relatively compact. 
\end{itemize} 
\end{theorem}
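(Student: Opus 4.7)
The plan is to establish the cycle
$(2)\Rightarrow(1)\Rightarrow(3)\Leftrightarrow(4)\Rightarrow(2)$,
so that in particular the non-trivial lifting assertion $(1)\Rightarrow(2)$ emerges at the end. The implication $(2)\Rightarrow(1)$ is immediate, since the quotient map $M_\star\rightarrow M_\star/A_\perp$ is bounded, hence weakly continuous, and sends $\tilde W$ onto $W$. The implication $(1)\Rightarrow(3)$ is precisely Corollary \ref{C-2.8}.

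For $(3)\Leftrightarrow(4)$, I would first observe that the Hahn--Banach extension of $|\varphi|\in M_\star/A_\perp$ is nothing but $|\tilde\varphi|\in M_\star^+$. Indeed, $|\tilde\varphi|\in M_\star^+$ attains its norm at $1\in A$, so $\Vert|\tilde\varphi|\Vert_{M_\star}=|\tilde\varphi|(1)$ already equals the norm of its restriction to $A$, and the uniqueness part of the non-commutative Gleason--Whitney theorem then forces $\widetilde{|\varphi|}=|\tilde\varphi|$. Consequently $\widetilde{|W|}=|\tilde W|$, and Corollary \ref{C-2.3} applied to $|W|\subseteq(M_\star/A_\perp)^+$ yields the equivalence $(3)\Leftrightarrow(4)$.

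The heart of the matter is the step $(4)\Rightarrow(2)$. I would invoke Sakai's polar decomposition $\tilde\varphi=v\cdot|\tilde\varphi|$ with a partial isometry $v\in M$, together with Akemann's characterization \cite{Akemann:TAMS67} of weak relative compactness in the predual of a von Neumann algebra: a bounded subset $K\subset M_\star$ is weakly relatively compact if and only if $\sup_{\omega\in K}|\omega(p_n)|\rightarrow 0$ for every mutually orthogonal sequence of projections $\{p_n\}$ in $M$. For any such $\{p_n\}$, Cauchy--Schwarz together with $\Vert v\Vert\leq 1$ yields
$$
|\tilde\varphi(p_n)|=\big||\tilde\varphi|(p_n v)\big|\leq |\tilde\varphi|(p_n)^{1/2}\,|\tilde\varphi|(v^*p_n v)^{1/2}\leq |\tilde\varphi|(p_n)^{1/2}\,\Vert\tilde\varphi\Vert^{1/2}.
$$
Taking the supremum over $\tilde\varphi\in\tilde W$ and using the boundedness of $\tilde W$ together with hypothesis $(4)$ gives $\sup_{\tilde\varphi\in\tilde W}|\tilde\varphi(p_n)|\rightarrow 0$, so Akemann's criterion concludes that $\tilde W$ is weakly relatively compact.

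The main obstacle is this last step: one must bridge the non-self-adjoint case to the positive case via polar decomposition and simultaneously control the uniformity over $\tilde W$. If one prefers to avoid quoting Akemann's criterion in its full generality, a self-contained substitute is to use Lemma \ref{L-2.4} in place of Sakai's polar decomposition, writing $\tilde\varphi=a^*\cdot|\tilde\varphi|$ with $a\in A$ and $\Vert a\Vert_\infty\leq 1$, and then running the same Cauchy--Schwarz estimate with $a^*$ in place of $v$.
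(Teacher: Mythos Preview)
Your argument is correct and follows the same cycle as the paper's own proof: $(2)\Rightarrow(1)$ trivially, $(1)\Rightarrow(3)$ by Corollary~\ref{C-2.8}, $(3)\Leftrightarrow(4)$ by Corollary~\ref{C-2.3}, and then back to $(2)$. The only difference is that for the step linking $(4)$ and $(2)$ the paper simply cites Sait\^{o}'s theorem \cite[Theorem 1]{Saito:TohokuMathJ67} (which gives the full equivalence $(2)\Leftrightarrow(4)$ for arbitrary subsets of $M_\star$), whereas you reprove the needed direction $(4)\Rightarrow(2)$ by hand via Akemann's orthogonal-projections criterion and the Cauchy--Schwarz estimate $|\tilde\varphi(p_n)|\leq |\tilde\varphi|(p_n)^{1/2}\Vert\tilde\varphi\Vert^{1/2}$---this is essentially the content of that half of Sait\^{o}'s result, so the two proofs coincide.
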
 
\begin{proof} (1) $\Rightarrow$ (3) is Corollary \ref{C-2.8}. (3) $\Leftrightarrow$ (4) follows from Corollary \ref{C-2.3}. (2) $\Leftrightarrow$ (4) is Kazuyuki Sait\^{o}'s result \cite[Theorem 1]{Saito:TohokuMathJ67}. Finally, (2) $\Rightarrow$ (1) is trivial. 
\end{proof} 

The next corollary is immediate from our main theorem. 

\begin{corollary}\label{C-2.10} The Mackey topology on $A$ is the relative topology induced from that on $M$. 
\end{corollary}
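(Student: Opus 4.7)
The plan is to show the two topologies have the same continuous seminorms on $A$ by transferring weakly relatively compact sets between $M_\star$ and $M_\star/A_\perp$ in both directions, using Theorem \ref{T-2.9} for the nontrivial direction and the quotient map for the trivial one. Recall that the Mackey topology on $A$ is generated by the seminorms $p_W(a) := \sup\{|\varphi(a)|\,\big|\,\varphi \in W\}$ with $W \subseteq M_\star/A_\perp$ weakly relatively compact, while the relative Mackey topology on $A$ from $M$ is generated by the restrictions to $A$ of $q_{\tilde{W}}(x) := \sup\{|\psi(x)|\,\big|\,\psi \in \tilde{W}\}$ with $\tilde{W} \subseteq M_\star$ weakly relatively compact.

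For the inclusion that the relative Mackey topology from $M$ is stronger than or equal to the Mackey topology on $A$, I would start with a weakly relatively compact $W \subseteq M_\star/A_\perp$ and apply the implication (1) $\Rightarrow$ (2) of Theorem \ref{T-2.9} (which is the main liftability result, resting on Corollary \ref{C-2.8} together with Sait\^{o}'s theorem) to conclude that $\tilde{W} = \{\tilde{\varphi}\,\big|\,\varphi \in W\} \subseteq M_\star$ is weakly relatively compact. Since $\tilde{\varphi}|_A = \varphi$ for every $\varphi \in W$, one has $p_W(a) = q_{\tilde{W}}(a)$ for all $a \in A$, whence the seminorm $p_W$ is just the restriction of the Mackey-continuous seminorm $q_{\tilde{W}}$ on $M$ to $A$.

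For the reverse inclusion, I would take any weakly relatively compact $\tilde{W} \subseteq M_\star$ and push it forward through the (bounded, linear, weakly continuous) quotient map $q : M_\star \to M_\star/A_\perp$; the image $W := q(\tilde{W})$ is then weakly relatively compact in $M_\star/A_\perp$. For $a \in A$ one has $q(\psi)(a) = \psi(a)$, hence $q_{\tilde{W}}(a) = p_W(a)$ for all $a \in A$, so the restriction of $q_{\tilde{W}}$ to $A$ is itself a Mackey-continuous seminorm on $A$. Combining the two observations, each generating seminorm of one topology is a generating seminorm of the other, so the topologies coincide on $A$.

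There is no real obstacle here beyond invoking Theorem \ref{T-2.9}; the work is already done in the theorem, and the corollary is essentially a dualization of the liftability statement together with the elementary observation that Hahn--Banach extension and quotient are inverse to each other at the level of seminorms on $A$.
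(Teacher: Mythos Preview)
Your proposal is correct and is precisely the standard unpacking of what the paper means by ``immediate from our main theorem'': the liftability direction $(1)\Rightarrow(2)$ of Theorem \ref{T-2.9} gives one inclusion of generating seminorms, and the quotient map gives the other. The paper gives no further argument, so your write-up simply makes explicit the same two-line dualization the author had in mind.
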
 

\section{Supplementry Results} 
\subsection{Semifinite Tracial Setup} In \cite{Ueda:MathAnn09} we proved, among other things, that any finite tracial non-commutative $H^\infty$-algebra $A=H^\infty(M,\tau)$ has the unique predual $M_\star/A_\perp$ and also satisfies a kind of F.~\& M.~Riesz theorem which we have called the non-commutative F.~\& M.~Riesz theorem in the present notes. As usual (see \cite[\S4]{Hensgen:MathScand96}), those results can easily be generalized to the semifinite tracial case. 

Let $M$ be a $\sigma$-finite von Neumann algebra with a faithful normal semifinite trace $\mathrm{Tr}$. A semifinite tracial non-commutative $H^\infty$-algebra $A = H^\infty(M,\mathrm{Tr})$ is defined in the same manner with $\mathrm{Tr}$ in place of a faithful normal tracial state $\tau$ as in the finite tracial setup. Due to the hypothesis, the restriction of $\mathrm{Tr}$ to $D := A\cap A^*$ must be semifinite. Thus one can construct, by using e.g.~\cite[Proposition 1.40, Theorem 2.15]{Takesaki:Book1}, at most countable (due to the $\sigma$-finiteness) orthogonal projections $\{e_i\}_{i=1}^\infty$ in $D$ such that $\sum_{i=1}^\infty e_i = 1$ and $\mathrm{Tr}(e_i) < +\infty$ for all $i$. Letting $p_m := \sum_{i=1}^m e_i \in D$ one has $p_m \nearrow 1$ as $m \rightarrow \infty$ and $\mathrm{Tr}(p_m) < +\infty$ for all $m$. The next is a key simple lemma, which shows that questions on semifinite tracial $H^\infty(M,\mathrm{Tr})$ can essentially be reduced to those on finite tracial ones.       

\begin{lemma}\label{L-3.1} For each $m$, $A_m := p_m A p_m$ is a non-commutative $H^\infty$-algebra $H^\infty(M_m,\tau_m)$ associated with $M_m := p_m M p_m$ and $\tau_m := \mathrm
{Tr}(p_m)^{-1}\mathrm{Tr}|_{M_m}$. 
\end{lemma}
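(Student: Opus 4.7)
The plan is to verify each of the defining properties of a finite tracial non-commutative $H^\infty$-algebra in turn, using the crucial fact that $p_m \in D \subseteq A \cap A^*$. So $p_m$ can be freely moved inside products with elements of $A$ (or $A^*$), and multiplication by $p_m$ on either side is $\sigma$-weakly continuous.

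First, I would check that $M_m = p_m M p_m$ is a finite von Neumann algebra and $\tau_m$ is a faithful normal tracial state; this is standard since $\mathrm{Tr}(p_m)<+\infty$. Next, for the algebraic structure of $A_m$: since $p_m\in A$, for any $a,b\in A$ the product $(p_m a p_m)(p_m b p_m) = p_m(ap_m b)p_m$ again lies in $p_m A p_m = A_m$, so $A_m$ is a subalgebra; it contains $p_m = p_m\cdot p_m\cdot p_m$ as its unit; and in fact $A_m = A\cap M_m$, which is $\sigma$-weakly closed because both factors are. Taking adjoints yields $A_m^* = p_m A^* p_m$ in the same way. The density condition follows by cutting down: if $y_\lambda\in A+A^*$ converges $\sigma$-weakly to $x\in M_m$, then $p_m y_\lambda p_m\to p_m x p_m = x$ $\sigma$-weakly and each $p_m y_\lambda p_m$ lies in $p_m A p_m + p_m A^* p_m = A_m + A_m^*$.

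To identify the diagonal, I would show $D_m := A_m\cap A_m^* = p_m D p_m$. The inclusion $\supseteq$ is immediate since $p_m D p_m \subseteq p_m A p_m \cap p_m A^* p_m$. For the reverse, any $x\in A_m\cap A_m^*$ belongs to both $A$ and $A^*$, hence to $D$, and $x = p_m x p_m\in p_m D p_m$. With $D_m$ identified, the candidate conditional expectation is simply the restriction $E_m := E|_{M_m}\colon M_m\to D_m$, where $E\colon M\to D$ is the (hypothesized) $\mathrm{Tr}$-preserving conditional expectation onto $D$ that is multiplicative on $A$. Since $p_m\in D$, the $D$-bimodule property of $E$ yields $E(p_m x p_m) = p_m E(x) p_m\in D_m$, so $E_m$ takes values in $D_m$; it fixes $D_m$ pointwise; and $\mathrm{Tr}$-preservation of $E$ gives $\tau_m\circ E_m = \tau_m$ after normalization by $\mathrm{Tr}(p_m)^{-1}$. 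Multiplicativity on $A_m$ is inherited: for $a,b\in A_m\subseteq A$, $E_m(ab) = E(ab) = E(a)E(b) = E_m(a)E_m(b)$.

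There is no real obstacle here; the argument is a bookkeeping verification once one sees that $p_m\in D$ makes the cut-down $x\mapsto p_m x p_m$ respect the whole structure ($A$, $A^*$, $D$, the conditional expectation, and the trace simultaneously). The only subtlety worth flagging is that we implicitly use the semifiniteness of $\mathrm{Tr}|_D$ (needed to ensure that the $\mathrm{Tr}$-preserving conditional expectation $E\colon M\to D$ exists in the first place), which is already built into the definition of $A=H^\infty(M,\mathrm{Tr})$ in the semifinite tracial setup.
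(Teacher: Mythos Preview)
Your proof is correct and follows essentially the same approach as the paper's: both arguments identify $D_m = A_m\cap A_m^* = p_m D p_m$, take $E_m = E|_{M_m}$ as the $\tau_m$-preserving conditional expectation (multiplicative on $A_m$ by inheritance), and obtain the $\sigma$-weak density of $A_m + A_m^*$ in $M_m$ by compressing an approximating net for $x\in M_m$ by $p_m$ on both sides. You simply spell out more of the routine verifications (that $A_m$ is a $\sigma$-weakly closed unital subalgebra via $A_m = A\cap M_m$, the $D$-bimodule property of $E$, the r\^{o}le of semifiniteness of $\mathrm{Tr}|_D$) than the paper does.
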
  
\begin{proof} The restriction of $E$ to $M_m = p_m M p_m$ gives a faithful normal conditional expectation from $M_m$ onto $D_m := A_m \cap A_m^*$ since $p_m D p_m = p_m A p_m \cap p_m A^* p_m$, and also $E_m$ clearly preserves $\tau_m$. It is clear that the restriction of $E_m$ to $A_m$ is multiplicative. Also, for each $x \in M_m$ one has $x \in M$ and $x = p_m x p_m$, and by the hypothesis, there is a net $a_\lambda + b_\lambda^*$ with $a_\lambda, b_\lambda \in A$ such that $a_\lambda + b_\lambda^* \rightarrow x$ $\sigma$-weakly so that $p_m a_\lambda p_m + (p_m b_\lambda p_m)^* = p_m(a_\lambda + b_\lambda^*)p_m \rightarrow p_m x p_m = x$ $\sigma$-weakly. Hence we are done. 
\end{proof} 

With the above lemma and our previous result \cite[Theorem 3.1]{Ueda:MathAnn09}, the proof of \cite[Proposition 4.8]{Hensgen:MathScand96}
works well for $A = H^\infty(M,\mathrm{Tr})$ without any essential change. 

\begin{proposition}\label{P-3.2} $M_\star/A_\perp$ has the property {\rm (X)}, and hence it is the unique predual of $A$. 
\end{proposition}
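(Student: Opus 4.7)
The plan is to reduce the semifinite tracial case to the finite tracial result \cite[Theorem 3.1]{Ueda:MathAnn09} via the corner compressions $A_m = p_m A p_m = H^\infty(M_m,\tau_m)$ furnished by Lemma \ref{L-3.1}, following the scheme of \cite[Proposition 4.8]{Hensgen:MathScand96}. Recall that property (X) for $M_\star/A_\perp$ means that every $\Phi \in A^\star = (M_\star/A_\perp)^{\star\star}$ which satisfies $\Phi(a) = \sum_k \Phi(a_k)$ for every wuC series $\sum_k a_k$ in $A$, where $a$ denotes the $\sigma(A, M_\star/A_\perp)$-limit of partial sums, already belongs to the canonical image of $M_\star/A_\perp$ in $A^\star$; the uniqueness of the predual will then follow from Godefroy's theorem.

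Fix such a $\Phi$. The first step is to check that each restriction $\Phi_m := \Phi|_{A_m} \in A_m^\star$ inherits the analogous property (X) hypothesis for $A_m$: since every functional on $A_m$ extends to $A^\star$ by Hahn--Banach, a series in $A_m$ is wuC in $A_m$ iff it is wuC in $A$, and since $M_m \hookrightarrow M$ is normal its $\sigma$-weak sum is the same element of $A_m \subseteq A$ in both settings. Applying \cite[Theorem 3.1]{Ueda:MathAnn09} to $A_m = H^\infty(M_m,\tau_m)$ then gives $\Phi_m \in (M_m)_\star/(A_m)_\perp$, and the non-commutative Gleason--Whitney theorem (cf.\ Remark \ref{R-2.5}) lifts $\Phi_m$ to a unique normal Hahn--Banach extension $\tilde\Phi_m \in (M_m)_\star$ with $\|\tilde\Phi_m\| \leq \|\Phi\|$ and compatibility $\tilde\Phi_{m'}|_{M_m} = \tilde\Phi_m$ whenever $m \leq m'$.

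The main obstacle is then to assemble the sequence $\{\tilde\Phi_m\}$ into a single normal functional $\psi \in M_\star$ with $\psi|_A = \Phi$. A natural candidate is $\hat\Phi_m \in M_\star$ defined by $\hat\Phi_m(x) := \tilde\Phi_m(p_m x p_m)$, represented by a density $h_m = p_m h_m p_m \in L^1(M,\mathrm{Tr})$ satisfying $p_m h_{m'} p_m = h_m$ for $m \leq m'$ and $\|h_m\|_1 \leq \|\Phi\|$, with $\|h_m\|_1$ non-decreasing. A non-commutative Scheff\'e-type convergence argument should then yield a norm limit $h \in L^1(M,\mathrm{Tr})$ of the $h_m$'s, defining $\psi(x) := \mathrm{Tr}(hx) \in M_\star$ with $\psi|_{M_m} = \tilde\Phi_m$, and in particular $\psi|_{A_m} = \Phi|_{A_m}$, for every $m$.

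It remains to upgrade this corner-by-corner agreement to $\psi|_A = \Phi$ on all of $A$. The functional $\Xi := \Phi - \psi|_A$ still satisfies the property (X) hypothesis and vanishes on $\bigcup_m A_m$, so by Gleason--Whitney uniqueness every Hahn--Banach extension $\tilde\Xi \in M^\star$ of $\Xi$ vanishes on every $M_m$; the normal part of such an extension thereby vanishes on $\bigcup_m M_m$ and, by $\sigma$-weak density, on all of $M$, so $\tilde\Xi$ is purely singular. If $\Xi \neq 0$, one then invokes the semifinite counterpart of the Chaumat-style construction in Lemma \ref{L-2.1} --- obtained by passing to a sufficiently large finite corner $M_m$ where the singular support of $\tilde\Xi$ is captured, and applying the finite tracial non-commutative F.~\&~M.~Riesz theorem \cite[Theorem 1]{Ueda:MathAnn09} there --- to manufacture a wuC series in $A$ on which $\Xi$ would fail the property (X) condition, a contradiction. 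Hence $\Xi = 0$, and Proposition \ref{P-3.2} follows.
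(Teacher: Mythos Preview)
Your overall strategy---reduce to the finite corners $A_m=H^\infty(M_m,\tau_m)$ via Lemma \ref{L-3.1}, invoke \cite[Theorem 3.1]{Ueda:MathAnn09} there, and reassemble---matches what the paper intends by ``the proof of \cite[Proposition 4.8]{Hensgen:MathScand96} works well''. However, two steps in your execution do not go through as written. First, the compatibility $\tilde\Phi_{m'}|_{M_m}=\tilde\Phi_m$ is asserted as a consequence of Gleason--Whitney, but that theorem only guarantees uniqueness among \emph{norm-preserving} extensions; $\tilde\Phi_{m'}|_{M_m}$ is a normal extension of $\Phi_m$ with norm $\leq\|\Phi_{m'}\|$, and nothing forces this to equal $\|\Phi_m\|$. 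Without compatibility your $L^1$-assembly of the $h_m$'s has no reason to converge. Second, the contradiction step is conceptually flawed: a purely singular $\tilde\Xi\in M^\star$ is supported (in $M^{\star\star}$) on a projection orthogonal to the canonical copy of $M$, hence orthogonal to every $p_m$; there is no ``sufficiently large finite corner $M_m$ where the singular support of $\tilde\Xi$ is captured''.

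Both issues disappear if you bypass the Gleason--Whitney lifts entirely. Take any Hahn--Banach extension $\tilde\Phi\in M^\star$ of $\Phi$ and write $\tilde\Phi=\tilde\Phi^n+\tilde\Phi^s$. Restricting to $M_m$ gives the Takesaki decomposition of $\tilde\Phi|_{M_m}$ (the restriction of a singular functional to a corner is still singular), so $\tilde\Phi^n|_{A_m}+\tilde\Phi^s|_{A_m}$ is the Lebesgue decomposition of $\Phi|_{A_m}$ in $A_m^\star$; since $\Phi|_{A_m}\in(M_m)_\star/(A_m)_\perp$ by property (X) in the finite case, the $L$-summand structure forces $\tilde\Phi^s|_{A_m}=0$ for every $m$. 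Now use the (X) hypothesis on $\Phi$ directly: for $a\in A$ the series $\sum_m ae_m$ and, for each fixed $m$, $\sum_{m'} e_{m'}ae_m$ are wuC in $A$ with $\sigma$-weak sums $a$ and $ae_m$, so $\Phi(a)=\sum_m\Phi(ae_m)=\sum_m\sum_{m'}\Phi(e_{m'}ae_m)$; each $e_{m'}ae_m$ lies in some $A_k$, whence $\Phi(e_{m'}ae_m)=\tilde\Phi^n(e_{m'}ae_m)$, and normality of $\tilde\Phi^n$ collapses the double sum back to $\tilde\Phi^n(a)$. Thus $\Phi=\tilde\Phi^n|_A\in M_\star/A_\perp$.
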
 

The non-commutative F.~\& M.~Riesz theorem also holds true for $A = H^\infty(M,\mathrm{Tr})$.
 
\begin{proposition}\label{P-3.3} Whenever $\phi \in M^\star$ annihilates $A$, the normal and the singular parts $\phi^n$ and $\phi^s$ annihilate $A$ separately. 
\end{proposition}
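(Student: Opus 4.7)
The plan is to reduce to the finite tracial F.~\& M.~Riesz theorem \cite[Theorem 3.1]{Ueda:MathAnn09} via the corners $M_m = p_m M p_m$ and $A_m = p_m A p_m = H^\infty(M_m,\tau_m)$ supplied by Lemma \ref{L-3.1}. For any $a\in A$, since $p_m \in D \subseteq A$ we have $p_m a p_m \in A_m \subseteq A$, so $\phi(p_m a p_m) = 0$. Hence $\phi|_{M_m}$ annihilates $A_m$, and the finite tracial case yields that $(\phi|_{M_m})^n$ and $(\phi|_{M_m})^s$ each annihilate $A_m$.

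Next I would identify these pieces with the restrictions of the original Lebesgue parts, i.e.\ show
\[
(\phi|_{M_m})^n = \phi^n|_{M_m}, \qquad (\phi|_{M_m})^s = \phi^s|_{M_m}.
\]
Normality of $\phi^n|_{M_m}$ is immediate, since $M_m = p_m M p_m$ is a $\sigma$-weakly closed corner and the $\sigma$-weak topology on $M_m$ coincides with the restriction of that on $M$. Singularity of $\phi^s|_{M_m}$ follows from the projection-based characterization of singular functionals (see \cite[Ch.~III, Theorem 3.8]{Takesaki:Book1}): every nonzero projection in $M_m$ is a nonzero projection in $M$, so the witnessing subprojections supplied by the singularity of $\phi^s$ on $M$ lie in $M_m$. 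Uniqueness of the Lebesgue decomposition on $M_m$ then gives the claimed identifications. Consequently $\phi^n(p_m a p_m) = 0$ for every $a \in A$ and every $m$.

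To finish, I would let $m\to\infty$. Because $p_m \nearrow 1$ strongly, for any $a \in A$ one has $p_m a p_m \to a$ in the strong operator topology (hence $\sigma$-weakly), so by normality of $\phi^n$,
\[
\phi^n(a) = \lim_{m\to\infty}\phi^n(p_m a p_m) = 0.
\]
Hence $\phi^n$ annihilates $A$, and then $\phi^s = \phi - \phi^n$ annihilates $A$ by subtraction.

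The only place that requires any real care is the identification of the Lebesgue parts under restriction to a corner; once that is in hand, everything else is a routine reduction. The argument is essentially formal, paralleling \cite[\S 4]{Hensgen:MathScand96}, and does not require any new technical input beyond the finite-tracial F.~\& M.~Riesz theorem and the cutoff device of Lemma \ref{L-3.1}.
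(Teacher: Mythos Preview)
Your argument is correct and follows the same route as the paper's proof: reduce to the finite tracial corners $A_m \subseteq M_m$ via Lemma~\ref{L-3.1}, identify the Lebesgue decomposition of $\phi|_{M_m}$ with the restriction of that of $\phi$, apply \cite[Theorem 3]{Ueda:MathAnn09}, and pass to the limit using normality of $\phi^n$ and $p_m \nearrow 1$. The paper dispatches the identification $(\phi|_{M_m})^n = \phi^n|_{M_m}$, $(\phi|_{M_m})^s = \phi^s|_{M_m}$ in a single clause (``clearly given''), whereas you spell it out; your justification via the projection characterization of singularity is fine once one notes that the singular part $M^\star\ominus M_\star$ is an $M$-bimodule, so one may reduce to positive singular functionals.
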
 
\begin{proof} Our previous result \cite[Theorem 3]{Ueda:MathAnn09} implies that $\phi^n|_{A_m} \equiv 0$ and $\phi^s|_{A_m} \equiv 0$ for all $m$ since the normal and singular decomposition of the restriction of $\phi$ to $M_m$ is clearly given by the simultaneous restriction of $\phi = \phi^n + \phi^s$ to $M_m$. By the normality of $\phi^n$ one has $\phi^n(a) = \lim_{m\rightarrow\infty}\phi^n(p_m a p_m) = 0$ for any $a \in A$, and moreover $\phi^s(a) = \phi(a) - \phi^n(a) = 0$ for any $a \in A$.
\end{proof}

The next corollary is immediate from the above proposition and Pfitzner's theorem \cite{Pfitzner:Studia93}. 

\begin{corollary}\label{C-3.4} $M_\star/A_\perp$ is an $L$-embedded space. Hence it has the property {\rm(V$^\star$)}. 
\end{corollary}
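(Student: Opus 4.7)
The plan is to first show that $M_\star/A_\perp$ is $L$-embedded — i.e., an $\ell^1$-summand of its own bidual — and then to invoke Pfitzner's theorem \cite{Pfitzner:Studia93} that any $L$-embedded Banach space automatically has Pe\l czy\'nski's property (V$^\star$).

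For the $L$-embeddedness, I would identify $(M_\star/A_\perp)^{\star\star}$ with $A^\star$, which in turn is naturally identified with $M^\star/A^\perp$ via Hahn--Banach, where $A^\perp := \{\phi \in M^\star : \phi|_A = 0\}$ (so $A^\perp \cap M_\star = A_\perp$). The next ingredient is Takesaki's classical $L$-decomposition $M^\star = M_\star \oplus_1 M^\star_s$ of the dual of a von Neumann algebra into its normal and singular parts (\cite[Ch.~III]{Takesaki:Book1}), where $M^\star_s$ denotes the space of singular functionals. Proposition \ref{P-3.3} says precisely that $A^\perp$ respects this $L$-decomposition, namely
$$A^\perp = A_\perp \oplus_1 (A^\perp \cap M^\star_s).$$

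It remains to push this through the quotient map. A short verification of the quotient norm shows that quotienting an $L$-direct sum by an $L$-summand that itself splits along the ambient decomposition preserves the $L$-structure, whence
$$A^\star \;=\; (M_\star/A_\perp) \,\oplus_1\, \bigl(M^\star_s/(A^\perp \cap M^\star_s)\bigr),$$
which exhibits $M_\star/A_\perp$ as an $L$-summand of its bidual, i.e., as an $L$-embedded Banach space. Pfitzner's theorem then delivers property (V$^\star$) immediately.

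Since Proposition \ref{P-3.3} already supplies the only substantive analytic input — that the singular part of any element of $A^\perp$ is again in $A^\perp$ — there is no real obstacle in this corollary; the remaining work is a formal manipulation of $L$-structures followed by a direct application of Pfitzner's theorem.
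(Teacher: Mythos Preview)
Your proposal is correct and matches the paper's approach: the paper states Corollary~\ref{C-3.4} as ``immediate from the above proposition [Proposition~\ref{P-3.3}] and Pfitzner's theorem~\cite{Pfitzner:Studia93}'', and what you have written is precisely the standard unpacking of that claim --- using the Takesaki decomposition $M^\star = M_\star \oplus_1 M^\star_s$, splitting $A^\perp$ along it via Proposition~\ref{P-3.3}, and quotienting to exhibit $M_\star/A_\perp$ as an $L$-summand of $A^\star = (M_\star/A_\perp)^{\star\star}$.
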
 

The non-commutative Gleason--Whitney theorem can also be shown for $A = H^\infty(M,\mathrm{Tr})$. The part of `automatic normality' immediately follows from Proposition \ref{P-3.3}, while the uniqueness part is shown in the same manner as in Remark \ref{R-2.5}. 

\begin{corollary}\label{C-3.5} A Hahn--Banach extension of each $\varphi \in M_\star/A_\perp \subseteq A^\star$ to the whole $M$ is automatically normal and unique. Hence the quotient map from $M_\star$ onto $M_\star/A_\perp$ has a right inverse.   
\end{corollary}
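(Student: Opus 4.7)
The two assertions will be deduced separately: automatic normality is a formal consequence of the F.~\& M.~Riesz theorem (Proposition \ref{P-3.3}), while uniqueness follows by transferring the argument of Remark \ref{R-2.5} using Lemma \ref{L-2.4}. With both established, the map $\varphi \mapsto \tilde\varphi$ sending each $\varphi \in M_\star/A_\perp$ to its unique Hahn--Banach extension is a well-defined map into $M_\star$, and since $\tilde\varphi|_A = \varphi$ its composition with the quotient map is the identity, giving the desired right inverse.

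For normality, fix $\varphi \in M_\star/A_\perp$, choose any representative $\tilde\varphi_0 \in M_\star$ of $\varphi$, and let $\psi \in M^\star$ be a Hahn--Banach extension. Since $(\psi - \tilde\varphi_0)|_A = 0$, i.e.~$\psi - \tilde\varphi_0 \in A^\perp$, Proposition \ref{P-3.3} applies to it. Writing the Takesaki decomposition $\psi = \psi^n + \psi^s$ with $\Vert\psi\Vert = \Vert\psi^n\Vert + \Vert\psi^s\Vert$, the normal and singular parts of $\psi - \tilde\varphi_0$ are $\psi^n - \tilde\varphi_0$ and $\psi^s$ respectively (by additivity and the normality of $\tilde\varphi_0$), so Proposition \ref{P-3.3} gives $\psi^s|_A = 0$ and hence $\psi^n|_A = \varphi$. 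It follows that $\Vert\psi^n\Vert \geq \Vert\varphi\Vert = \Vert\psi\Vert = \Vert\psi^n\Vert + \Vert\psi^s\Vert$, which forces $\psi^s = 0$ and $\psi = \psi^n \in M_\star$.

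For uniqueness, let $\psi_1, \psi_2$ be two Hahn--Banach extensions of $\varphi$, both now known to lie in $M_\star$. Alaoglu applied to the $\sigma$-weakly compact unit ball of $A$ (a $\sigma$-weakly closed subset of the unit ball of $M$) produces $a \in A$ with $\Vert a\Vert_\infty \leq 1$ and $\varphi(a) = \Vert\varphi\Vert$, hence $\psi_i(a) = \Vert\psi_i\Vert$ for $i = 1,2$. The proof of Lemma \ref{L-2.4} uses only Sakai's polar decomposition and support-projection manipulations, neither of which depends on finiteness of the trace; it therefore applies verbatim to each $\psi_i$ and yields $\psi_i = a^* \cdot |\psi_i|$ and $|\psi_i| = a \cdot \psi_i$. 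For any $b \in A$ we then have $|\psi_1|(b) = \psi_1(ba) = \varphi(ba) = \psi_2(ba) = |\psi_2|(b)$, and self-adjointness of the positive functionals $|\psi_i|$ forces the same identity on $A^*$. Since $A + A^*$ is $\sigma$-weakly dense in $M$ and both $|\psi_i|$ are normal, $|\psi_1| = |\psi_2|$ on all of $M$, and therefore $\psi_1 = a^* \cdot |\psi_1| = a^* \cdot |\psi_2| = \psi_2$. No step presents a genuine obstacle; the only point requiring any care is to verify that the elementary proof of Lemma \ref{L-2.4} transfers without change to the semifinite setup, which it does.
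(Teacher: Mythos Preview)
Your proof is correct and follows essentially the same route the paper indicates: normality is deduced from the non-commutative F.~\&~M.~Riesz theorem (Proposition~\ref{P-3.3}) exactly as in \cite[Theorem 3 (2)]{Ueda:MathAnn09}, and uniqueness is the argument of Remark~\ref{R-2.5} transferred verbatim, with the correct observation that Lemma~\ref{L-2.4} uses only polar-decomposition facts independent of finiteness of the trace. The paper merely cites these two ingredients without spelling out the details you have supplied.
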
 

Moreover all the assertions in \cite[Theorem 4]{Ueda:MathAnn09} still hold true in the semifinite tracial case. However we do not know whether or not the liftability of weakly relatively compact subsets in $M_\star/A_\perp$ survives even in the semifinite setting, though Corollary \ref{C-3.5} provides its basic prerequisite fact. Here it should be pointed out that Theorem \ref{T-2.9} does never hold as it is, in the (non-finite) semifinite setting due to Kazuyuki Sait\^{o}'s result \cite{Saito:TohokuMathJ67}. Indeed, the equivalence (1) $\Leftrightarrow$ (3) in Theorem \ref{T-2.9} does not hold in the case of $D= A = M = B(\ell^2)$ for example. In closing of this subsection we mention that it is an interesting question whether the results in \cite{Ueda:MathAnn09} and in the present notes still hold true for any (not necessarily tracial) non-commutative $H^\infty$-algebra.  

\subsection{No Second Predual and Sequential Continuity of $L$-Projection} 

There are two properties on $H^\infty(\mathbb{D})$ established in Ando's paper \cite{Ando:CommentMath78} which remain to be discussed in the non-commutative setup. One is that $H^\infty(\mathbb{D})$ has no second predual (\cite[Corollary of Theorem 1]{Ando:CommentMath78}) and the other is the weak$^\star$-weak--sequential continuity of the $L$-projection from $H^\infty(\mathbb{D})^\star$ onto $L^1(\mathbb{T})/H^\infty(\mathbb{D})_\perp$ (\cite[Theorem 2]{Ando:CommentMath78}). Those still hold true for any semifinite tracial $A=H^\infty(M,\mathrm{Tr})$.     

For the first properties it suffices to prove that the closed unit ball of the unique predual has no extreme point since the closed unit ball of the dual of any Banach space has a rich set of extreme points thanks to the Alaoglu and the Krein--Milman theorems. We begin with a simple lemma. A definitive result in the direction has been known (see e.g.~\cite[Corollary 4.2 + Corollary 4.4]{GuerreroMartin:JFA05}), but we do give a sketch for the reader's convenience because the work \cite{GuerreroMartin:JFA05} deals with it in the framework of $JBW^*$-triples. 

\begin{lemma}\label{L-3.6} If a semifinite von Neumann algebra $M$ is diffuse {\rm(}i.e., no minimal projection{\rm)}, then the closed unit ball of the predual $M_\star$ has no extreme point so that $M$ has no second predual.  
\end{lemma}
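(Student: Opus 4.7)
The proof I have in mind goes through the stronger statement that $(M_\star)_1$ has \emph{no} extreme point; since the closed unit ball of any dual Banach space is weak$^\star$--compact by Banach--Alaoglu and so has extreme points by Krein--Milman, this immediately implies that $M_\star$ is not a dual space, i.e.~$M$ has no second predual.

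To work with $(M_\star)_1$ concretely, I would invoke Dixmier's identification $M_\star \cong L^1(M,\mathrm{Tr})$ via $\varphi \leftrightarrow h$, $\varphi(\,\cdot\,) = \mathrm{Tr}(\,\cdot\,h)$, and reduce the goal to showing that each $h \in L^1(M,\mathrm{Tr})$ with $\Vert h\Vert_1 = 1$ admits a nontrivial convex decomposition inside the $L^1$-unit ball. Taking the polar decomposition $h = v\,|h|$ with $p := s(|h|)$, the point is to construct a projection $e \in M$ with $0 < e < p$ that commutes with $|h|$. Once such an $e$ is available, the splitting $h = he + h(p-e)$ gives $|he| = |h|e$ and $|h(p-e)| = |h|(p-e)$ (an immediate computation from the commutation), and with $\lambda := \mathrm{Tr}(|h|e) \in (0,1)$ the elements $h_1 := \lambda^{-1}he$ and $h_2 := (1-\lambda)^{-1}h(p-e)$ produce $h = \lambda h_1 + (1-\lambda)h_2$ with $h_1, h_2$ distinct members of $(L^1)_1$, since their right supports are the orthogonal projections $e$ and $p-e$.

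The construction of $e$ splits into two cases. If $|h|$ has more than one nonzero spectral value, then $t \mapsto \mathrm{Tr}(|h|\chi_{(0,t]}(|h|))$ is a nondecreasing function moving from $0$ to $1$ that must take an intermediate value in $(0,1)$, and the corresponding spectral projection $e := \chi_{(0,t]}(|h|)$ does the job, commuting with $|h|$ automatically. The remaining case is $|h| = \alpha p$ with $\alpha > 0$ and $\alpha\,\mathrm{Tr}(p) = 1$, and here the diffuseness hypothesis is indispensable: the corner $pMp$ is then still a diffuse von Neumann algebra, so it contains a projection $e$ with $0 < e < p$, and $e$ trivially commutes with $\alpha p$.

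The main obstacle lies in this second case, where spectral calculus alone cannot split $|h|$ and one really has to cut a proper subprojection out of $s(|h|)$ inside $M$; this is precisely where the absence of minimal projections is used. Apart from that step, the argument is simply the non-commutative transcription of the classical fact that $L^1$ of an atomless measure space has no extreme points in its unit ball.
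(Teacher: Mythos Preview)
Your argument is correct and follows essentially the same path as the paper's sketch: identify $M_\star$ with $L^1(M,\mathrm{Tr})$, take the polar decomposition $h=v|h|$, and split the support of $|h|$ by a commuting projection to exhibit $h$ as a nontrivial convex combination in the unit ball. The only cosmetic difference is that the paper phrases the key step as ``the relative commutant of the spectral measure of $h$ in $M$ is still diffuse,'' which covers both of your cases at once, whereas you reach the same conclusion by a case split (a nontrivial spectral projection when $|h|$ is not a scalar multiple of its support, and diffuseness of the corner $pMp$ otherwise).
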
 
\begin{proof} (Sketch) Identify $M_\star$ with the non-commutative $L^1$-space $L^1(M,\mathrm{Tr})$ equipped with the $1$-norm $\Vert\cdot\Vert_{\mathrm{Tr},1}$, see \cite[\S1]{PisierXu:Handbook03}. Let $h \in L^1(M,\mathrm{Tr})$ be a non-zero element, and $h = v|h|$ be the polar decomposition. It is not hard to see that the relative commutant of the spectral measure of $h$ in $M$ is still diffuse, and hence one can find two non-zero projection $p_1, p_2$ in the relative commutant in such a way that $p_1+p_2$ coincides with the support projection of $h$. Set $h_i := v p_i |h| \in L^1(M,\mathrm{Tr})$, $i=1,2$, and one has $\Vert h\Vert_{\mathrm{Tr},1} = \mathrm{Tr}(|h|) = \mathrm{Tr}(p_1|h|) + \mathrm{Tr}(p_2 |h|) = \Vert h_1\Vert_{\mathrm{Tr},1} + \Vert h_2\Vert_{\mathrm{Tr},1}$ since $p_1,p_2$ commute with $|h|$ in a suitable sense. It immediately follows that the unit ball of $M_\star = L^1(M,\mathrm{Tr})$ does never have an extreme point. 
\end{proof}

The next is a non-commutative counterpart of the fact that $H_0^1(\mathbb{D})$ is an $L$-embedded space (see e.g.~\cite[Example 1.1 (d) in Ch.~IV.1]{HarmandWerner^2:Book}). 

\begin{lemma}\label{L-3.7} The pre-annihilator $A_\perp$ of $A = H^\infty(M,\mathrm{Tr})$ inside $M_\star$ is an $L$-embedded space. 
\end{lemma}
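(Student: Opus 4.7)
The plan is to realise $(A_\perp)^{**}$ explicitly inside $M^\star$ and then show that the canonical $L$-decomposition of $M^\star$ restricts to $(A_\perp)^{**}$, giving an $L$-projection onto $A_\perp$.

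First I would identify the bidual. Since $A$ is $\sigma$-weakly closed in $M = (M_\star)^\star$, the bipolar theorem gives $(A_\perp)^\perp = A$, where the outer annihilator is taken in $M$. Hence $(A_\perp)^\star = M/A$ isometrically, and therefore
\[
(A_\perp)^{\star\star} \;=\; (M/A)^\star \;=\; A^\perp \;\subseteq\; M^\star,
\]
with the canonical embedding $A_\perp \hookrightarrow (A_\perp)^{\star\star}$ being simply the inclusion $A_\perp \subseteq A^\perp$ induced by $M_\star \subseteq M^\star$.

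Next I would invoke Takesaki's $L$-decomposition of $M^\star$: every $\phi \in M^\star$ admits a unique decomposition $\phi = \phi^n + \phi^s$ with $\phi^n \in M_\star$ normal and $\phi^s$ singular, and this decomposition is isometric, $\Vert\phi\Vert = \Vert\phi^n\Vert + \Vert\phi^s\Vert$ (see \cite[Ch.~III]{Takesaki:Book1}). In other words $M^\star = M_\star \oplus_1 M^\star_s$ is an $L$-decomposition, so $M_\star$ itself is $L$-embedded.

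Now I would apply Proposition \ref{P-3.3} (the non-commutative F.~\& M.~Riesz theorem for the semifinite tracial case): if $\phi \in A^\perp$, then both $\phi^n$ and $\phi^s$ lie in $A^\perp$. This means that the Takesaki decomposition of $M^\star$ restricts to the subspace $(A_\perp)^{\star\star} = A^\perp$, yielding
\[
A^\perp \;=\; (A^\perp \cap M_\star) \oplus_1 (A^\perp \cap M^\star_s) \;=\; A_\perp \oplus_1 (A^\perp \cap M^\star_s),
\]
which is an $L$-decomposition. The associated projection $\phi \mapsto \phi^n$ from $(A_\perp)^{\star\star}$ onto $A_\perp$ is therefore an $L$-projection, exhibiting $A_\perp$ as an $L$-embedded space. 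The only nontrivial ingredient here is Proposition \ref{P-3.3}, already proved above; there is no real obstacle beyond correctly bookkeeping the identification $(A_\perp)^{\star\star} = A^\perp$.
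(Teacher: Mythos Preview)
Your proof is correct and follows essentially the same route as the paper. The paper's version invokes Li's criterion (Theorem IV.1.2 in \cite{HarmandWerner^2:Book}) to verify that the closed subspace $A_\perp$ of the $L$-embedded space $M_\star$ is again $L$-embedded, by checking that the $L$-projection $\phi\mapsto\phi^n$ maps the weak$^\star$ closure of $A_\perp$ in $M^\star$ back into $A_\perp$---which is exactly Proposition~\ref{P-3.3}; you unpack this criterion directly by identifying $(A_\perp)^{\star\star}$ with $A^\perp\subset M^\star$ and observing that the Takesaki $\ell^1$-decomposition restricts.
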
 
\begin{proof} Let $\{\phi_\lambda\}$ be a net in $A_\perp$ that converges to $\phi \in M^\star$ in $\sigma(M_\star,M)$. Clearly, $\phi|_A \equiv 0$ holds. Then $\phi$ is decomposed into the normal and singular part $\phi = \phi^n + \phi^s \in M_\star\oplus (M^\star\ominus M_\star)$ in the sense of Takesaki (see \cite[Ch.~III]{Takesaki:Book1}). The non-commutative F.~\& M.~Riesz theorem, i.e, Proposition \ref{P-3.3} shows that $\phi^n|_A \equiv 0$ and $\phi^s|_A \equiv 0$. Then, by Li's criterion (see e.g.~\cite[Theorem 1.2 in Ch.~IV]{HarmandWerner^2:Book}) we get the desired assertion. 
\end{proof} 

Combining the above two lemmas with \cite[Proposition 1.12, Proposition 1.14 in Ch.~IV]{HarmandWerner^2:Book} we get:  

\begin{corollary}\label{C-3.8} The closed unit ball of the unique predual $M_\star/A_\perp$ of $A = H^\infty(M,\mathrm{Tr})$ has no extreme point if $M$ is diffuse. Thus $A$ has no second predual under the same hypothesis. 
\end{corollary}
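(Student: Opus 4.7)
The plan is to derive the second assertion from the first by a standard Krein--Milman argument, and to establish the first (namely, that the closed unit ball of $M_\star/A_\perp$ has no extreme point) by combining Lemmas~\ref{L-3.6} and~\ref{L-3.7} with the structural results on $L$-embedded subspaces cited from \cite{HarmandWerner^2:Book}.

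First I would feed Lemma~\ref{L-3.7} into \cite[Proposition~1.12 in Ch.~IV]{HarmandWerner^2:Book} to realize $A_\perp$ as an $L$-summand in the appropriate bidual decomposition of $M^\star = (M_\star)^{\star\star}$, and then invoke \cite[Proposition~1.14 in Ch.~IV]{HarmandWerner^2:Book} to convert this configuration into an extreme-point comparison between the closed unit balls of $M_\star$ and of the quotient $M_\star/A_\perp$. Together these propositions ensure that any extreme point of the closed unit ball of $M_\star/A_\perp$ admits a norm-preserving lift to an extreme point of the closed unit ball of $M_\star$. Since by Lemma~\ref{L-3.6} the diffuseness hypothesis on $M$ eliminates every extreme point of the closed unit ball of $M_\star$, the closed unit ball of $M_\star/A_\perp$ can likewise possess no extreme point.

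For the second assertion, I would argue by contradiction: if $A$ admitted a second predual $Y$, then $M_\star/A_\perp = Y^\star$ would be a dual Banach space, so by Alaoglu its closed unit ball would be weak-$^\star$ compact, and by Krein--Milman it would be the norm-closed convex hull of its (necessarily non-empty) set of extreme points. This contradicts the first assertion.

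The main obstacle is the bookkeeping in the first step: one has to verify that Lemma~\ref{L-3.7} delivers exactly the form of $L$-embedding required by \cite[Propositions~1.12 and 1.14 in Ch.~IV]{HarmandWerner^2:Book}---namely, via Li's criterion as used in the proof of Lemma~\ref{L-3.7}---and that the lift of extreme points from the quotient back to the ambient space is really what those propositions provide. Once this matching is confirmed, the conclusion of the corollary follows essentially formally.
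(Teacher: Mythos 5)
Your proposal is correct and follows essentially the same route as the paper, which likewise obtains the first assertion by combining Lemma~\ref{L-3.6} and Lemma~\ref{L-3.7} with \cite[Propositions~1.12 and 1.14 in Ch.~IV]{HarmandWerner^2:Book} to lift extreme points of the quotient ball to extreme points of the ball of $M_\star$, and deduces the absence of a second predual via the Alaoglu and Krein--Milman theorems exactly as you do. The only difference is that you spell out the bookkeeping the paper leaves implicit.
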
  
 
We should mention that Lemma \ref{L-3.7} gives a more application. In fact, it follows that $A$ and $M_\star/A_\perp$ have the Daugavet property, see \cite[\S2]{GuerreroMartin:JFA05}.    

\medskip 
Next we discuss the weak$^\star$-weak--sequential continuity of the $L$-projection from $A^\star$ onto its $L$-summand $M_\star/A_\perp$ with $A = H^\infty(M,\mathrm{Tr})$. We could give a direct proof to it in the finite tracial case by using only our Amar--Lederer type result \cite[Theorem 1]{Ueda:MathAnn09} together with Pe{\l}czynski's trick, see Remark \ref{R-3.10} below. After the previous version of these notes was circulated via the internet, Prof.~Pfitzner communicated us that he could recently prove the weak$^\star$-weak sequential continuity of the $L$-projection for {\it any} $L$-embedded space. Thus, his result \cite{Pfitzner:ArchivderMath} and Corollary \ref{C-3.4} imply:   

\begin{corollary}\label{C-3.9} The $L$-projection from $A^\star$ onto $M_\star/A_\perp$ with $A = H^\infty(M,\mathrm{Tr})$ is weak$^\star$-weak--sequentially continuous. In particular, the singular part $A^\star\ominus(M_\star/A_\perp)$ is weak$^\star$--sequentially complete. 
\end{corollary}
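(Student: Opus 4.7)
The plan is to obtain both assertions essentially by invoking Pfitzner's general theorem \cite{Pfitzner:ArchivderMath}, since by Corollary \ref{C-3.4} the predual $M_\star/A_\perp$ is an $L$-embedded Banach space, i.e., $(M_\star/A_\perp)^{\star\star} = (M_\star/A_\perp)\oplus_1 (M_\star/A_\perp)^s$ in the bidual. Under the canonical identification $A^\star = (M_\star/A_\perp)^{\star\star}$ (which is available because $M_\star/A_\perp$ is the unique predual of $A$, see Proposition \ref{P-3.2}), the $L$-projection $P\colon A^\star \twoheadrightarrow M_\star/A_\perp$ is precisely the projection of that $\ell^1$-decomposition, so Pfitzner's theorem immediately yields the first claim: $P$ is weak$^\star$-weak sequentially continuous.

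For the second assertion I would argue as follows. Let $\{\phi_n\}$ be a weak$^\star$-Cauchy sequence in the singular summand $A^\star \ominus (M_\star/A_\perp)$. By the uniform boundedness principle $\sup_n \Vert\phi_n\Vert < +\infty$, and the pointwise limit $\phi(a) := \lim_n \phi_n(a)$ for $a \in A$ defines an element $\phi \in A^\star$ with $\phi_n \to \phi$ in $\sigma(A^\star, A)$. By the first part just established, $P(\phi_n) \to P(\phi)$ in $\sigma(M_\star/A_\perp, A)$. But $P(\phi_n)= 0$ for every $n$ by hypothesis, so $P(\phi)=0$, meaning $\phi$ lies in the singular summand. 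Thus $\{\phi_n\}$ converges weak$^\star$ to a point of $A^\star \ominus (M_\star/A_\perp)$, proving weak$^\star$-sequential completeness.

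The substantive content of the corollary is therefore entirely absorbed by (i) Corollary \ref{C-3.4}, which furnishes the $L$-embedded structure, and (ii) Pfitzner's external result. The only obstacle is really a bookkeeping one, namely verifying that the $L$-projection induced from the $L$-embedding on the bidual coincides with the $L$-projection appearing in the statement, which is automatic from the uniqueness of the predual. Note that Remark \ref{R-3.10} (alluded to in the text) gives a self-contained argument for the finite tracial case using only the Amar--Lederer type result \cite[Theorem 1]{Ueda:MathAnn09} and Pe\l czynski's trick; however, in the semifinite setup, using Pfitzner's theorem is the cleanest route and avoids the need to reprove sequential continuity by hand.
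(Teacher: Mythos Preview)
Your proof is correct and follows exactly the paper's approach: the paper simply states that Pfitzner's theorem \cite{Pfitzner:ArchivderMath} together with Corollary \ref{C-3.4} yields the result, and you have merely spelled out the straightforward deduction of the second assertion from the first. One minor remark: the identification $A^\star = (M_\star/A_\perp)^{\star\star}$ does not require the uniqueness of the predual (Proposition \ref{P-3.2}), since $A = (M_\star/A_\perp)^\star$ holds by construction of the standard predual; the invocation is harmless but unnecessary.
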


\begin{remark}\label{R-3.10}{\rm We give a direct proof to Corollary \ref{C-3.9} in the finite tracial case $A = H^\infty(M,\tau)$, though it has a restricted merit now. Our proof seems simpler than Ando's that is a bit tricky and cannot be applied to the non-commutative setting directly. Let $\{\phi_k\}$ be a sequence in $A^\star$, and suppose that $\lim_{k\rightarrow\infty}\phi_k = 0$ in $\sigma(A^\star,A)$. Let $\tilde{\phi}_k$ be the Hahn--Banach extension of $\phi_k$ to the whole $M$, and $\tilde{\phi}_k = \tilde{\phi}_k^n + \tilde{\phi}_k^s$ be the normal and singular decomposition (\cite[Ch.~III]{Takesaki:Book1}). It suffices to prove that $\lim_{k\rightarrow\infty}\tilde{\phi}_k^s(a) = 0$ for all $a \in A$. We may and do assume that all $\tilde{\phi}_k^s$ are non-zero, and define a non-zero positive $\omega := \sum_{k=1}^\infty\frac{1}{2^k\Vert\tilde{\phi}_k^s\Vert}|\tilde{\phi}^s_k| \in M^\star\ominus M_\star$, where $|\tilde{\phi}^s_k|$ is the absolute value of $\tilde{\phi}_k^s$ in the polar decompostion of $\tilde{\phi}_k^s$ (\cite[Ch.~III]{Takesaki:Book1}) with regarding $M^\star$ as the predual of the second dual $M^{\star\star}$. As in Lemma \ref{L-2.1} there are a sequence $\{a_i\}$ in $A$ and a projection $p \in M^{\star\star}$ such that (i) $\lim_{i\rightarrow\infty} a_i = 0$ in $\sigma(M,M_\star)$, (ii) $\lim_{i\rightarrow\infty} a_i = p$ in $\sigma(M^{\star\star},M^\star)$, (iii) $\langle\omega,p\rangle = \omega(1)$, (iv) $\sum_{i=1}^\infty|\phi(a_i -a_{i-1})| < + \infty$ for every $\phi \in M^\star$ with $a_0 := 0$. In particular, $\tilde{\phi}_k^s = \tilde{\phi}_k\cdot p$ holds for every $k$. Choose and fix an arbitrary $a \in A$. Letting $u_j := a_j - a_{j-1}$ we have, by (ii), $\tilde{\phi}_k^s(a) = \langle\tilde{\phi}_k,pa\rangle = \lim_{i\rightarrow\infty}\tilde{\phi}_k(a_i a) = \sum_{j=1}^\infty\tilde{\phi}_k(u_j a)$. For any subset $J \subseteq \mathbb{N}$ the above (iv) ensures the existence of $\sum_{j \in J} u_j a \in A$ such that $\psi(\sum_{j \in J} u_j a) = \sum_{j\in J}\psi(u_j a)$ for any $\psi \in M_\star$ as in Lemma \ref{L-2.1}. Define $\nu_k(J) := \tilde{\phi}_k(\sum_{j\in J} u_j a)$, $J \subseteq \mathbb{N}$, which is a finitely additive signed measure on $\mathbb{N}$ with finite total variation and $\lim_{k\rightarrow\infty}\nu_k(J)=0$ for every $J \subseteq \mathbb{N}$. Thus the classical Phillips lemma (see \cite[Theorem 1.28 in Ch.~III]{Takesaki:Book1}) implies that $\lim_{k\rightarrow\infty}\tilde{\phi}_k^s(a) = \lim_{k\rightarrow\infty}\sum_{j=1}^\infty\tilde{\phi}_k(u_j a) = \lim_{k\rightarrow\infty}\sum_{j=1}^\infty\nu_k(\{j\}) = 0$.} 
\end{remark}

\end{document}